\documentclass[12pt]{article}
\usepackage{amsmath,amssymb,amsthm,amsfonts,latexsym,hyperref,enumerate}

\newtheorem{lemma}{Lemma}[section]
\newtheorem{theorem}[lemma]{Theorem}

\newtheorem{proposition}[lemma]{Proposition}

\begin{document}

\title{A family of Neumaier graphs containing examples with exactly five eigenvalues}
\author{Bart De Bruyn, Rhys J. Evans, Sergey Goryainov and Jack Koolen}
\maketitle{}

\begin{abstract}
A Neumaier graph is an edge-regular graph with a regular clique. Such a graph is said to have parameters $(v,k,\lambda;e,s)$ if it is a $k$-regular graph on $v$ vertices having a clique of size $s$ such that every edge is contained in $\lambda$ triangles and every vertex outside $C$ is adjacent with exactly $e$ vertices inside $C$. It was an open problem whether Neumaier graphs can exist with exactly five eigenvalues. In the present paper, we describe a family of Neumaier graphs, and show that inside this family there are 1063 nonisomorphic Neumaier graphs with parameters $(v,k,\lambda;e,s)=(48,14,2;1,4)$, among which 25 have exactly five eigenvalues. These 1063 graphs are also the first known examples of Neumaier graphs for the mentioned parameters. 
\end{abstract}

\section{Introduction} \label{sec1}

For $k \geq 1$, a $k$-regular graph on $v \geq 2$ vertices is called {\em edge-regular} if any two adjacent vertices have a constant number $\lambda$ of common neighbours. A nonempty clique $C$ of a regular graph is called {\em regular} if every vertex outside $C$ is adjacent to a constant number $e$ of vertices in $C$, called the {\em nexus} of $C$. An edge-regular graph with parameters $(v,k,\lambda)$ having a regular clique $C$ of size $s$ and nexus $e$ called a {\em Neumaier graph} with parameters $(v,k,\lambda;e,s)$. 

This terminology stems from a paper of Neumaier \cite{Ne} in which he studies regular cliques in edge-regular graphs, and a class of designs whose collinearity graphs have regular cliques. All the constructed examples were strongly regular, and Neumaier wondered whether there could exist edge-regular non-strongly regular graphs with a regular clique (\cite[p. 248]{Ne}). Such non-strongly regular graphs are now called {\em strictly Neumaier graphs}.

The problem had been open for quite some time, until Greaves and Koolen \cite{Gr-Ko:1} constructed the first infinite family of strictly Neumaier graphs using cyclotomic numbers. In the mean time, many more examples and families have been found \cite{AA-WC-MDB-JK-SZ,AA-MDB-SZ,RE-SG-EK-AM,RE,Go-Sh,Gr-Ko:2,Gr-ZT}, the smallest one having 16 vertices \cite{RE} and being unique up to isomorphisms \cite{AA-MDB-SZ}. This smallest example has parameters $(16,9,4;2,4)$ and exactly six distinct eigenvalues. The second smallest strictly Neumaier graphs all have parameters $(24,8,2;1,4)$, and among the six known examples, there are four that also have six eigenvalues \cite[Corollary 3]{RE-SG-EK-AM}.  

As no strictly Neumaier graphs were known with less than six eigenvalues, the open problem arose whether examples with exactly four or five eigenvalues can exist (those with less than four are strongly regular). The case with exactly four eigenvalues was settled in \cite{AA-BDB-JD-JK}, where it was shown that such strictly Neumaier graphs cannot exist. The case with exactly five eigenvalues remained open till now.  

Indeed, in the current paper, we will construct 25 Neumaier graphs with exactly five eigenvalues. In fact, we will construct a family of Neumaier graphs, depending on two parameters $\lambda,\mu \in \mathbb{N}^\ast$ satisfying $\mu \mid (\lambda+2)$. Subsequently, we will classify all examples for $(\lambda,\mu) \in \{ (1,1),(1,3),(2,1),(2,2) \}$. For $(\lambda,\mu)$ equal to (1,1) or (1,3), the construction gives unique (known) strongly-regular Neumaier graphs, respectively being the $3 \times 3$ rook graph and the complement of the Sch\"afli graph. We also show that there are no examples for $(\lambda,\mu)=(2,1)$. The main bulk of work concerns the case $(\lambda,\mu)=(2,2)$, where we find up to isomorphisms 1063 examples, among which 25 have exactly five eigenvalues. To some extent we have hereby relied on computer computations. After constructing 319488 Neumaier graphs in an entirely theoretical way, we have performed computations in GAP \cite{GAP} and SageMath \cite{SAGE} to determine how many nonisomorphic ones there are. We also determined the eigenvalues by computer, but in view of the importance of the previously mentioned open problem whether Neumaier graphs with five eigenvalues can exist, we also provide a theoretical argument that allows to determine the spectrum for all these 25 Neumaier graphs in our classification.   

The paper is organized as follows. In Section \ref{sec2}, we provide a new construction method for Neumaier graphs. The basic ingredients here are three edge-regular graphs (whose parameters depend on the two mentioned numbers $\lambda$ and $\mu$), each with a suitable pair of partitions in independent sets. In Section \ref{sec3}, we discuss the case $\mu=1$, in particular the cases $(\lambda,\mu)=(1,1)$ and $(\lambda,\mu)=(2,1)$, as well as the case $(\lambda,\mu)=(1,3)$. In Section \ref{sec4}, we classify all examples for $(\lambda,\mu)=(2,2)$. The 1063 nonisomorphic examples there can be divided into 109 subclasses that can be found in Tables \ref{tab2} and \ref{tab3}. These tables also contain basic information such as the class sizes and the spectra. Families 1, 4 and 34 are those that contain the 25 graphs with exactly five eigenvalues. In fact, those families are exactly those for which the three involved edge-regular graphs are all $4 \times 4$ rook graphs or Shrikhande graphs, with at least one $4 \times 4$ rook graph (otherwise, you still get edge-regular graphs with parameters $(48,14,2)$, but without regular cliques).  In Section \ref{sec5}, we prove in a theoretical way that any edge-regular graph obtained by combining three $4 \times 4$ rook graphs and/or Shrikhande graphs as described in the construction must have spectrum $14^1 6^1 2^{26} (-2)^{12} (-6)^8$.  
 
We have also verified by computer that there are only two vertex-transitive graphs among the 1063 graphs. Both are contained in Family 1, and only one of them is also a Cayley graph. In fact, this particular graph is a Cayley graph for two groups, namely $C_2 \times S_4$ and $C_2 \times C_2 \times A_4$.

We also provide an online document \cite{BDB-RE-SG-JK} that contains the g6-strings for the 1063 constructed graphs, ordered according to which of the 109 subclasses they are contained in.

\section{A new family of Neumaier graphs} \label{sec2}

Let $\lambda$ and $\mu$ be positive integers such that $\mu$ is a divisor of $\lambda+2$. Put $n:=\mu \lambda (\lambda+2)$, $k := \lambda(\mu+1)$ and $\nu := \frac{\lambda+2}{\mu}$.

Suppose $\Gamma_1=(V_1,E_1)$, $\Gamma_2=(V_2,E_2)$ and $\Gamma_3=(V_3,E_3)$ are three vertex-disjoint edge-regular graphs with parameters $(n,k,\lambda)$.

For $i \in \{ 1,2,3 \}$, let $\Omega_i$ be a partition of $V_i$ in $\nu$ sets of size $\mu^2 \lambda$, and let $\mathcal{P}_i,\mathcal{P}_i'$ be two partitions of $V_i$ in $\mu \nu$ independent sets of size $\mu \lambda$ such that every element of $\Omega_i$ is the union of $\mu$ elements of $\mathcal{P}_i$ and of $\mu$ elements of $\mathcal{P}_i'$. Suppose also that the following holds:
\begin{quote}
If for some $i \in \{ 1,2,3 \}$, we have $\omega \in \Omega_i$, $P \in \mathcal{P}_i$ and $P' \in \mathcal{P}_i'$ such that $P,P' \subseteq \omega$, then $|P \cap P'|=\lambda$.
\end{quote}
Suppose $i,j \in \{ 1,2,3 \}$ are distinct. A bijection $\varphi:\mathcal{P}_i \to \mathcal{P}_j'$ is called {\em admissible} if for every $\omega = P_1 \cup P_2 \cup \cdots \cup P_\mu \in \Omega_i$ with $P_1,P_2,\ldots,P_\mu \in \mathcal{P}_i$, we have that $\omega^\varphi := P_1^\varphi \cup P_2^\varphi \cup \cdots \cup P_\mu^\varphi \in \Omega_j$. A bijection $\varphi:\mathcal{P}_{j}' \to \mathcal{P}_i$ is called {\em admissible} if its invers $\varphi^{-1}:\mathcal{P}_i \to \mathcal{P}_j'$ is admissible.

Consider now three admissible bijections $\varphi_{12}: \mathcal{P}_1 \to \mathcal{P}_2'$, $\varphi_{23}:\mathcal{P}_2 \to \mathcal{P}_3'$ and $\varphi_{31}:\mathcal{P}_3 \to \mathcal{P}_1'$ such that $\varphi_{31} \circ \varphi_{23} \circ \varphi_{12}$ induces the trivial permutation on the set $\Omega_1$. The inverses of $\varphi_{12}$, $\varphi_{23}$ and $\varphi_{31}$ are respectively denoted by $\varphi_{21}$, $\varphi_{32}$ and $\varphi_{13}$.

We now define a graph $\overline{\Gamma}$ with vertex set $V_1 \cup V_2 \cup V_3$. Adjacency is defined as follows:
\begin{itemize}
\item two vertices of the same $V_i$, $i \in \{ 1,2,3 \}$, are adjacent in $\overline{\Gamma}$ if and only if they are adjacent in $\Gamma_i$;
\item let $i,j \in \{ 1,2,3 \}$ with $i \not= j$, let $\mathcal{P}^\ast \in \{ \mathcal{P}_i,\mathcal{P}_i' \}$ and $\mathcal{Q}^\ast \in \{ \mathcal{P}_j,\mathcal{P}_j' \}$ such that $\varphi_{ij}$ is a bijection between $\mathcal{P}^\ast$ to $\mathcal{Q}^\ast$; a vertex $v \in V_i$ is then adjacent in $\overline{\Gamma}$ with a vertex $w \in V_j$ if $w \in \varphi_{ij}(P)$, where $P$ is the unique element of $\mathcal{P}^\ast$ containing $v$. 
\end{itemize}
Then the following holds.

\begin{theorem} \label{theo2.1}
The graph $\overline{\Gamma}$ satisfies the following properties:
\begin{enumerate}
\item[$(i)$] There are $3 \mu \lambda (\lambda+2)$ vertices.
\item[$(ii)$] Every vertex is adjacent to $(3\mu+1)\lambda$ other vertices.
\item[$(iii)$] Every two adjacent vertices have exactly $\lambda$ common neighbors.
\item[$(iv)$] If $\Gamma_i$ for some $i \in \{ 1,2,3 \}$ contains a clique $C$ of size $\lambda+2$, then in $\overline{\Gamma}$, $C$ is a regular clique of size $\lambda+2$ and nexus $1$, implying that $\overline{\Gamma}$ is then a Neumaier graph with parameters $(3 \mu \lambda (\lambda+2),(3\mu+1)\lambda,\lambda;1,\lambda+2)$.
\end{enumerate}
\end{theorem}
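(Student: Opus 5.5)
Parts (i) and (ii) follow by direct counting; (iii) needs a case analysis of where the two common neighbours can lie; (iv) then falls out of the cross-adjacency structure.

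For (i), each $V_i$ has $n=\mu\lambda(\lambda+2)$ vertices, so there are $3n$ vertices. For (ii), fix $v\in V_1$. Inside $V_1$ it has $k=\lambda(\mu+1)$ neighbours. The rule for $\varphi_{12}:\mathcal{P}_1\to\mathcal{P}_2'$ gives $v$ exactly $|\varphi_{12}(P)|=\mu\lambda$ neighbours in $V_2$, where $P\in\mathcal{P}_1$ contains $v$. The rule for $\varphi_{13}:\mathcal{P}_1'\to\mathcal{P}_3$ gives $\mu\lambda$ neighbours in $V_3$. The total is $\lambda(\mu+1)+2\mu\lambda=(3\mu+1)\lambda$. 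By the cyclic symmetry of the construction, $v\in V_1$ suffices throughout.

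For (iii) I would split on the positions of the two adjacent vertices $v,w$.

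\textbf{Case A: $v,w\in V_1$.} They have $\lambda$ common neighbours in $\Gamma_1$. Being adjacent, they lie in different members of $\mathcal{P}_1$ and of $\mathcal{P}_1'$, since these are independent sets. Their neighbourhoods in $V_2$ are the images under $\varphi_{12}$ of distinct members of $\mathcal{P}_1$. These images are distinct members of $\mathcal{P}_2'$, hence disjoint. The same holds in $V_3$. So the count is exactly $\lambda$.

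\textbf{Case B: $v\in V_1$, $w\in V_2$, with $w\in\varphi_{12}(P)$ for $P\in\mathcal{P}_1$ containing $v$.}
\begin{itemize}
\item \emph{Neighbours $u\in V_1$ of $v$ adjacent to $w$.} Such $u$ are exactly the $\Gamma_1$-neighbours of $v$ lying in $P$; this set is empty because $P$ is independent.
\item \emph{Neighbours in $V_2$.} Symmetrically, the relevant set lies in $\varphi_{12}(P)\in\mathcal{P}_2'$, which is independent, so it is empty.
\item \emph{Neighbours in $V_3$.} These form the intersection of $\varphi_{13}(P')$, where $P'\in\mathcal{P}_1'$ contains $v$, with $\varphi_{23}(Q)$, where $Q\in\mathcal{P}_2$ contains $w$. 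Here $\varphi_{13}(P')\in\mathcal{P}_3$ and $\varphi_{23}(Q)\in\mathcal{P}_3'$.
\end{itemize}
So it remains to show that this intersection has size $\lambda$. By the hypothesis on $\Omega_3$, it suffices to show that both sets lie in the same $\omega\in\Omega_3$.

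This is the main obstacle, and I would handle it with admissibility. Let $\omega_1\in\Omega_1$ contain $v$; then $P,P'\subseteq\omega_1$. Admissibility of $\varphi_{12}$ puts $\varphi_{12}(P)$ inside $\omega_2:=\omega_1^{\varphi_{12}}\in\Omega_2$. Since $w\in\varphi_{12}(P)\subseteq\omega_2$, also $Q\subseteq\omega_2$. Then $\varphi_{23}(Q)\subseteq\omega_2^{\varphi_{23}}\in\Omega_3$. Admissibility of $\varphi_{31}$ together with the condition that $\varphi_{31}\circ\varphi_{23}\circ\varphi_{12}$ fixes $\omega_1$ gives $\varphi_{31}(\omega_2^{\varphi_{23}})=\omega_1$. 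Hence the $\mu$ members of $\mathcal{P}_3$ in $\omega_2^{\varphi_{23}}$ map onto the $\mathcal{P}_1'$-members of $\omega_1$, one of which is $P'$. Therefore $\varphi_{13}(P')\subseteq\omega_2^{\varphi_{23}}$, and the intersection has size $\lambda$.

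I would check carefully that induced maps on $\Omega$ are well defined. This holds because each $\omega$ is a union of $\mu$ members of the relevant partition, and a bijection maps these unions onto unions. The remaining cross pairs, such as $V_2$–$V_3$ and $V_3$–$V_1$, follow by the cyclic symmetry.

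For (iv), let $C\subseteq V_1$ be a clique of size $\lambda+2$.
\begin{itemize}
\item \emph{Vertices $u\in V_1\setminus C$.} If $u$ had two neighbours in $C$, those two adjacent vertices of $C$ would have at least $\lambda-1$ common neighbours in $C$ plus $u$, exceeding $\lambda$. So $u$ has at most one neighbour in $C$. Counting edges: each vertex of $C$ has $k-(\lambda+1)=\lambda\mu-1$ neighbours outside $C$ in $V_1$. This totals $(\lambda+2)(\lambda\mu-1)$, which equals $n-(\lambda+2)=|V_1\setminus C|$. So each $u\in V_1\setminus C$ has exactly one neighbour in $C$.
\item \emph{Vertices $x\in V_2$.} Members of $C$ lie in distinct members of $\mathcal{P}_1$, and there are $\mu\nu=\lambda+2$ such members. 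So $C$ meets each member of $\mathcal{P}_1$ exactly once. The neighbourhoods of $C$ in $V_2$ are therefore the $\varphi_{12}$-images, which partition $V_2$, and each $x\in V_2$ has exactly one neighbour in $C$.
\item \emph{Vertices in $V_3$.} The same argument with $\mathcal{P}_1'$ and $\varphi_{13}$ gives nexus $1$ on $V_3$.
\end{itemize}
Combined with (i)–(iii), this yields the stated Neumaier parameters.
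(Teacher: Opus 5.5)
Your proposal is correct and follows essentially the same route as the paper: disjoint images of distinct blocks for edges inside one $V_i$; for cross edges, independence of the relevant blocks rules out common neighbours in $V_{i_1}\cup V_{i_2}$, and the trivial composition on $\Omega$ puts both $V_{i_3}$-neighbourhoods in one member of $\Omega_{i_3}$; for (iv), the edge count plus the fact that each block meets $C$ exactly once. The only slip is in (iv): two vertices of $C$ have $|C|-2=\lambda$ common neighbours inside $C$, not $\lambda-1$, so adding $u$ gives $\lambda+1>\lambda$, which is the contradiction you need.
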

\begin{proof}
(i) The number of vertices of $\overline{\Gamma}$ is equal to $|V_1 \cup V_2 \cup V_3| = 3\mu \lambda (\lambda+2)$.

\medskip (ii)  Consider a vertex of $\overline{\Gamma}$ belonging to $V_i$, $i \in \{ 1,2,3 \}$. Then $x$ is adjacent with $k=\lambda(\mu+1)$ vertices of $V_i$, namely the $k$ vertices of $\Gamma_i$ adjacent to $x$. We now also show that $x$ is adjacent to exactly $\lambda\mu$ vertices of each $V_j$, $j \in \{ 1,2,3 \} \setminus \{ i \}$, resulting all together in $(3\mu+1)\lambda$ neighbors of $x$. Let $\mathcal{P}^\ast \in \{ \mathcal{P}_i,\mathcal{P}_i' \}$ and $\mathcal{Q}^\ast \in \{ \mathcal{P}_j,\mathcal{P}_j' \}$ such that $\varphi_{ij}: \mathcal{P}^\ast \to \mathcal{Q}^\ast$. Then the neighbors of $x$ in $V_j$ are exactly the $\mu \lambda$ elements in $\varphi_{ij}(P)$, where $P$ is the unique element of $\mathcal{P}^\ast$ containing $x$.

\medskip (iii) Let $\{ x_1,x_2 \}$ be an edge of $\overline{\Gamma}$. We distinguish two cases:

(a) Suppose $x_1$ and $x_2$ belong to the same $V_i$, $i \in \{ 1,2,3 \}$. Then there are exactly $\lambda$ vertices in $V_i$ adjacent to $x_1$ and $x_2$, namely the $\lambda$ neighbors of $x_1$ and $x_2$ in $\Gamma_i$. We show that these are the only neighbors of $x_1$ and $x_2$ in $\overline{\Gamma}$. Suppose to the contrary that the vertex $y$ of $V_j$, $j \in \{ 1,2,3 \} \setminus \{ i \}$, is adjacent to both $x_1$ and $x_2$. Let $\mathcal{P}^\ast \in \{ \mathcal{P}_i,\mathcal{P}_i' \}$ and $\mathcal{Q}^\ast \in \{ \mathcal{P}_j,\mathcal{P}_j' \}$ be such that $\varphi_{ij}:\mathcal{P}^\ast \to \mathcal{Q}^\ast$. Let $P_1$ and $P_2$ be the respective elements of $\mathcal{P}^\ast$ containing $x_1$ and $x_2$. Since $P_1$ and $P_2$ are independent sets and $x_1 \sim x_2$, we know that $P_1$ and $P_2$ are disjoint, and so also $\varphi_{ij}(P_1)$ and $\varphi_{ij}(P_2)$ are disjoint. However, this is impossible: as $x_l \sim y$, we have $y \in \varphi_{ij}(P_l)$ for every $l \in \{ 1,2 \}$.  

(b) Suppose $x_1$ and $x_2$ are two vertices belonging to distinct $V_i$'s, say $x_1 \in V_{i_1}$ and $x_2 \in V_{i_2}$, where $i_1,i_2 \in \{ 1,2,3 \}$ with $i_1 \not= i_2$. Put $\{ i_3 \} := \{ 1,2,3 \} \setminus \{ i_1,i_2 \}$. We show that there are no neighbors of $x_1$ and $x_2$ belonging to $V_{i_1} \cup V_{i_2}$. By symmetry, it suffices to show this for $V_{i_2}$. Let $\mathcal{P}^\ast \in \{ \mathcal{P}_{i_1},\mathcal{P}_{i_1}' \}$ and $\mathcal{Q}^\ast \in \{ \mathcal{P}_{i_2},\mathcal{P}_{i_2}' \}$ such that $\varphi_{i_1i_2}:\mathcal{P}^\ast \to \mathcal{Q}^\ast$. The vertices of $V_{i_2}$ adjacent to $x_1$ are exactly the elements of the set $\varphi_{i_1i_2}(P)$, where $P$ is the unique element of $\mathcal{P}^\ast$ containing $x_1$. Since $x_1 \sim x_2$, we have $x_2 \in \varphi_{ij}(P)$. Now, $\varphi_{ij}(P)$ is an independent set and so no vertex of $\varphi_{ij}(P)$ distinct from $x_2$ is adjacent to $x_2$. So, no common neighbor of $x_1$ and $x_2$ belongs to $V_{i_2}$ (or $V_{i_1}$).

So, the only possible neighbors of $x_1$ and $x_2$ belong to $V_{i_3}$. Let $\mathcal{P}^\ast \in \{ \mathcal{P}_{i_1},\mathcal{P}_{i_1}' \}$, $\mathcal{Q}^\ast \in \{ \mathcal{P}_{i_2},\mathcal{P}_{i_2}' \}$ and $\mathcal{R}^\ast,\mathcal{S}^\ast \in \{ \mathcal{P}_{i_3},\mathcal{P}_{i_3}' \}$ with $\mathcal{R}^\ast \not= \mathcal{S}^\ast$ such that $\varphi_{i_1i_3}:\mathcal{P}^\ast \to \mathcal{R}^\ast$, $\varphi_{i_2i_3}:\mathcal{Q}^\ast \to \mathcal{S}^\ast$. Let $P_1$ and $P_2$ be the unique elements in respectively $\mathcal{P}^\ast$ and $\mathcal{Q}^\ast$ containing $x_1$ and $x_2$. Let $\omega_1 \in \Omega_1$ and $\omega_2 \in \Omega_2$ such that $P_1 \subseteq \omega_1$ and $P_2 \subseteq \omega_2$. Since $x_1 \in P_1$ and $x_2 \sim x_1$, we have $x_2 \in \varphi_{i_1i_2}(P_1)$. Since $\varphi_{i_1i_2}(P_1)$ is contained in $\omega_1^{\varphi_{i_1i_2}}$, we have $\omega_2 = \omega_1^{\varphi_{i_1i_2}}$. Since $\varphi_{i_3 i_1} \circ \varphi_{i_2 i_3} \circ \varphi_{i_1 i_2}$ is the trivial permutation of $\Omega_{i_1}$, we have $\omega_3 := \omega_1^{\varphi_{i_1i_3}} = \omega_2^{\varphi_{i_2i_3}}$. The common neighbors of $x_1$ and $x_2$ (belonging to $V_3$) are the vertices of the set $\varphi_{i_1 i_3}(P_1) \cap \varphi_{i_2 i_3}(P_2)$. As $\varphi_{i_1 i_3}(P_1),\varphi_{i_2 i_3}(P_2) \subseteq \omega_3$, $\varphi_{i_1 i_3}(P_1) \in \mathcal{R}^\ast$, $\varphi_{i_2 i_3}(P_2) \in \mathcal{S}^\ast$ and $\{ \mathcal{R}^\ast,\mathcal{S}^\ast \} = \{ \mathcal{P}_{i_3},\mathcal{P}_{i_3}' \}$, we have $|\varphi_{i_1 i_3}(P_1) \cap \varphi_{i_2 i_3}(P_2)|=\lambda$, and so $x_1$ and $x_2$ have exactly $\lambda$ common neighbours.

\medskip (iv) If a vertex $y$ of $\Gamma_i$ outside $C$ is adjacent to two distinct vertices $x_1$ and $x_2$ of $C$, then $x_1$ and $x_2$ would have at least $\lambda+1$ common neighbors, namely the vertices inside $(C \setminus \{ x_1,x_2 \}) \cup \{ y \}$, a contradiction.

So, the number of vertices of $\Gamma_i$ adjacent to a necessarily unique vertex of $C$ is equal to $|C| \cdot (k - |C| + 1) = (\lambda+2)(\lambda \mu + \lambda - \lambda - 2 +1) = (\mu \lambda -1) (\lambda+2) = n - |C|$. These are all the vertices of $V_i \setminus C$. So, $C$ is a regular clique with nexus $1$ in the graph $\Gamma_i$. We also show that $C$ is a regular clique with nexus $1$ inside $\overline{\Gamma}$. To that end, consider a vertex $x$ belonging to $V_j$, $j \in \{ 1,2,3 \} \setminus \{ i \}$. Let $\mathcal{P}^\ast \in \{ \mathcal{P}_j,\mathcal{P}_j' \}$ and $\mathcal{Q}^\ast \in \{ \mathcal{P}_i,\mathcal{P}_i' \}$ such that $\varphi_{ji}: \mathcal{P}^\ast \to \mathcal{Q}^\ast$. The $\mu \nu = \lambda + 2$ independent sets of $\mathcal{Q}^\ast$ intersect $C$ in at most $1$ vertex, but as $\mathcal{Q}^\ast$ partitions $V_i$, they must intersect $C$ in exactly one vertex. The vertices of $C$ adjacent to $x$ are the vertices of $\varphi_{ji}(P) \cap C$, where $P$ is the unique element of $\mathcal{P}^\ast$ containing $x$. As $\varphi_{ji}(P) \in \mathcal{Q}^\ast$, we have $|\varphi_{ji}(P) \cap C|=1$.
\end{proof}

\medskip \noindent In Sections \ref{sec3} and \ref{sec4}, we treat some instances where the parameters $\lambda$ and $\mu$ are relatively small. These cases are also interesting for the following proposition. As regard to this result, we recall that there are up to isomorphisms two strongly regular graphs with parameters $(16,6,2,2)$, namely the $4 \times 4$ rook graph and the Shrikhande graph \cite{Sh}.

\begin{proposition} \label{prop2.2}
Suppose $\Gamma_i$ with $i \in \{ 1,2,3 \}$ is a strongly regular graph and contains a clique $C$ of size $\lambda+2$. Then one of the following cases occurs:
\begin{itemize}
\item $(\lambda,\mu)=(1,1)$ and $\Gamma_i \cong K_3$ (regarded as an imprimitive strongly regular graph);
\item $(\lambda,\mu)=(1,3)$ and $\Gamma_i$ is the $3 \times 3$ rook graph;
\item $(\lambda,\mu)=(2,2)$ and $\Gamma_i$ is either the $4 \times 4$ rook graph or the Shrikhande graph.
\end{itemize}
\end{proposition}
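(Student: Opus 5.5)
The plan is to show that the clique $C$ forces the smallest eigenvalue of $\Gamma_i$ to be a specific rational number. Integrality of that number then pins down $(\lambda,\mu)$, and the strongly regular parameters then identify $\Gamma_i$.

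\emph{Step 1: $C$ is a Delsarte clique.} By the proof of Theorem~\ref{theo2.1}(iv), $C$ is a regular clique of $\Gamma_i$ with size $s=\lambda+2$ and nexus $1$. Let $\theta$ be the smallest eigenvalue of $\Gamma_i$. I would invoke the Delsarte clique bound $s\le 1-k/\theta$ together with the fact that, for a strongly regular graph, a regular clique attains this bound (a result going back to Neumaier~\cite{Ne}). To be self-contained, I would verify it directly. Let $A$ be the adjacency matrix and put $w=\chi_C-\frac{s}{n}\mathbf{1}$, which is orthogonal to $\mathbf{1}$. Since every vertex of $C$ has $s-1$ neighbours in $C$ and every vertex outside $C$ has exactly one, a short computation gives $w^TAw=\theta\, w^Tw$. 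As $\theta$ is the minimum of the Rayleigh quotient on $\mathbf{1}^\perp$, $w$ is a $\theta$-eigenvector. Comparing coordinates then yields $\theta=-k/(s-1)$. With $k=\lambda(\mu+1)$ and $s-1=\lambda+1$ this gives
$$\theta=-\frac{(\mu+1)\lambda}{\lambda+1}.$$

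\emph{Step 2: integrality.} Since $\theta$ is rational and an eigenvalue of an integer matrix, it is an integer. This also rules out the conference-graph case, whose nonprincipal eigenvalues are irrational unless they are integers. Because $\gcd(\lambda,\lambda+1)=1$, we get $(\lambda+1)\mid(\mu+1)$, so $\mu\ge\lambda$. On the other hand $\mu\mid\lambda+2$ gives $\mu\le\lambda+2$, so $\mu\in\{\lambda,\lambda+1,\lambda+2\}$, and the cases go as follows.
\begin{itemize}
\item $\mu=\lambda+1$ is impossible, since $\lambda+1\nmid\lambda+2$.
\item $\mu=\lambda+2$ requires $(\lambda+1)\mid(\lambda+3)$, so $\lambda=1$ and $\mu=3$.
\item $\mu=\lambda$ requires $\lambda\mid(\lambda+2)$, so $\lambda\in\{1,2\}$, giving $(1,1)$ and $(2,2)$.
\end{itemize}

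\emph{Step 3: identification.} In each case I would recover the remaining strongly regular parameter $\mu'$ from the standard relations $r+\theta=\lambda-\mu'$ and $r\theta=\mu'-k$, using the known $\theta$.
\begin{itemize}
\item For $(1,1)$ we have $n=3$ and $k=2$, so $\Gamma_i\cong K_3$, regarded as an imprimitive strongly regular graph.
\item For $(1,3)$ we have $(n,k,\lambda)=(9,4,1)$ and $\theta=-2$, which gives $\mu'=2$. The unique $(9,4,1,2)$ graph is the $3\times3$ rook graph.
\item For $(2,2)$ we have $(n,k,\lambda)=(16,6,2)$ and $\theta=-2$, which gives $\mu'=2$. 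By Shrikhande's theorem~\cite{Sh}, $\Gamma_i$ is the $4\times4$ rook graph or the Shrikhande graph.
\end{itemize}

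The main obstacle is Step~1, namely justifying rigorously that a nexus-$1$ regular clique meets the Delsarte bound. This includes handling the degenerate imprimitive case $K_3$, where $\theta=-1$ still agrees with the formula. Once $\theta=-k/(s-1)$ is established, the rest is elementary divisibility and the known classifications.
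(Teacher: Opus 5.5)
Your overall strategy works, but the self-contained verification you give for Step~1 is wrong, and everything else rests on that step. With nexus $1$ one has $A\chi_C=(s-1)\chi_C+(\mathbf{1}-\chi_C)=(s-2)\chi_C+\mathbf{1}$. Counting edges between $C$ and its complement gives $s(k-s+1)=n-s$. Together these yield $Aw=(s-2)w=\lambda w$ for $w=\chi_C-\frac{s}{n}\mathbf{1}$.

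So the Rayleigh quotient of $w$ is $\lambda>0$, not $\theta$, and the ``minimum on $\mathbf{1}^\perp$'' argument never applies. Moreover, comparing coordinates in $Aw=\theta w$ would force $\theta=s-2$, not $-k/(s-1)$. Concretely, in the $4\times4$ rook graph with $C$ a row, $Aw=2w$ while $\theta=-2$. This is what Delsarte theory predicts: equality in the clique bound means $\chi_C$ is orthogonal to the $\theta$-eigenspace. Hence $w$ lies in the eigenspace of the \emph{other} nonprincipal eigenvalue $r$.

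Read correctly, your computation still gives what you need. Suppose $(\lambda,\mu)\neq(1,1)$. Then $n-s=(\lambda+2)(\lambda\mu-1)>0$, so $w\neq 0$ and $w\perp\mathbf{1}$. Also $\Gamma_i$ is connected, since every vertex outside $C$ has a neighbour in $C$. Hence $\lambda$ is a nonprincipal eigenvalue, and since $\lambda>0>\theta$, we get $\lambda=r$. Now $r+\theta=\lambda-\mu'$ and $r\theta=\mu'-k$ give $\theta=-\mu'$ and $k=(\lambda+1)\mu'$. That is exactly $\theta=-k/(s-1)$. As a bonus, $(\lambda+1)\mid\lambda(\mu+1)$ then follows from integrality of the count $\mu'$ alone, with no appeal to integrality of eigenvalues. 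Alternatively, simply cite the equality case of Hoffman's clique bound for strongly regular graphs, as the paper does.

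With Step~1 repaired (or cited), your proof is correct but organized differently from the paper's. The paper proceeds in three stages:
\begin{itemize}
\item it treats $\lambda\in\{1,2\}$ by direct parameter computation, excluding $(2,1)$ and $(2,4)$ because $\mu'$ would be $4/3$ or $10/3$;
\item it disposes of conference graphs with $\lambda\ge 3$ by a parameter count;
\item only then does it use integrality of the smallest eigenvalue $-m$ together with Hoffman's equality $m=\lambda(\mu+1)/(\lambda+1)$.
\end{itemize}
You apply that same equality for all $\lambda$ at once, so a single divisibility analysis outputs exactly $(1,1)$, $(1,3)$, $(2,2)$. This is shorter and uniform.

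One wording point: integrality does not ``rule out'' conference graphs, since the $3\times3$ rook graph $(9,4,1,2)$ is one. It only makes a separate conference-graph case unnecessary, because $\theta$ is already known to be rational.
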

\begin{proof}
Suppose first that $\lambda \in \{ 1,2 \}$. The fact that $\mu \mid (\lambda+2)$ then implies that $(\lambda,\mu) \in \{ (1,1),(1,3),(2,1),(2,2),(2,4) \}$. If $(\lambda,\mu)=(1,1)$, then $(n,k,\lambda)=(3,2,1)$ and $\Gamma_i$ is necessarily isomorphic to $K_3$ regarded as an imprimitive strongly regular graph. If $(\lambda,\mu)=(1,3)$, then $(n,k,\lambda)=(9,4,1)$ and $\Gamma_i$ is necessarily isomorphic to the $3 \times 3$ rook graph. If $(\lambda,\mu)$ is equal to $(2,1)$, $(2,2)$ or $(2,4)$, then $(n,k,\lambda)$ is equal to either $(8,4,2)$, $(16,6,2)$ or $(32,10,2)$, and the number of common neighbours that two distinct nonadjacent vertices have is respectively equal to $\frac{4 \cdot (4-2-1)}{8-4-1} = \frac{4}{3}$, $\frac{6 \cdot (6-2-1)}{16-6-1}=2$ and $\frac{10 \cdot (10-2-1)}{32-10-1} = \frac{10}{3}$. We conclude in this case that $(\lambda,\mu)=(2,2)$ and that $\Gamma_i$ is a strongly regular graph with parameters $(16,6,2,2)$, necessarily isomorphic to the $4 \times 4$ rook graph or the Shrikhande graph. 

Suppose that $\Gamma_i$ is a conference graph with $\lambda \geq 3$ (\cite[Section 1.3]{BCN}). Then $k=2(\lambda+1)$ and $n=2k+1$. The former equation implies that $\lambda(\mu+1) = 2(\lambda+1)$, or equivalently $\lambda \mu = \lambda+2$. The latter equation implies that $\mu \lambda (\lambda+2) = 2 \lambda (\mu+1) +1$, i.e. $2\lambda+1 = \mu \lambda^2 = \lambda (\lambda+2)$, or equivalently, $\lambda^2=1$, a contradiction.

So, suppose that $\lambda \geq 3$ and that $\Gamma_i$ is not a conference graph. Then the smallest eigenvalue $-m$ of $\Gamma_i$ must be integral by \cite[Theorem 1.3.1(ii)]{BCN}. As $C$ is a regular clique in $\Gamma_i$, we know from Hoffman's bound (\cite[Proposition 1.3.2]{BCN}) that $|C| = 1 + \frac{k}{m}$, or equivalently, $m = \frac{\lambda(\mu+1)}{\lambda+1}$. So, we must have $(\lambda+1) \mid (\mu + 1)$ and $\mu \geq \lambda$. As $\mu \mid (\lambda+2)$, we even have $\lambda \leq \mu \leq \lambda +2$ and as $1 \leq \frac{\lambda+2}{\mu} \leq \frac{\lambda+2}{\lambda} = 1 + \frac{2}{\lambda}$ with $\lambda \geq 3$, we have $\mu = \lambda+2$. But then $(\lambda+1) \mid (\mu+1)=(\lambda+3)$ implies $\lambda+1 \mid 2$, again a contradiction.
\end{proof}

\medskip \noindent \textbf{Remark.} In case $\Gamma_i$ is a strongly regular graph, then \cite[Proposition 1.3.2]{BCN} also implies that the independent sets in $\mathcal{P}_i$ and $\mathcal{P}_i'$ are regular. For the $4 \times 4$ rook graphs and the Shrikhande graphs, these independent sets are regular with nexus 2. For the graphs under consideration in Section \ref{sec5}, the reasoning allowing to determine the spectrum relies on the fact that the $\Gamma_i$'s are strongly regular and that the independent sets in each $\mathcal{P}_i \cup \mathcal{P}_i'$ are regular. 

\section{Discussion of the cases $(\mu,\lambda) \in \{ (1,1),(1,3),(2,1) \}$} \label{sec3}

Consider the construction of Section \ref{sec2}, and suppose that $\mu=1$. Then the independent sets have size $\lambda$ and therefore the condition $|P \cap P'|=\lambda$ from Section \ref{sec2} implies that $P=P'$ and thus that $\mathcal{P}_i = \mathcal{P}_i'$ for every $i \in \{ 1,2,3 \}$. We have therefore the following situation.

The graphs $\Gamma_1=(V_1,E_1)$, $\Gamma_2=(V_2,E_2)$ and $\Gamma_3=(V_3,E_3)$ are three vertex-disjoint edge-regular graphs with parameters $(n,k,\lambda)=(\lambda(\lambda+2),2\lambda,\lambda)$. Each $\Gamma_i$ has a partition $\mathcal{P}_i = \{ A^{(i)}_1,A^{(i)}_2,\ldots,A^{(i)}_{\lambda+2} \}$ in $\lambda+2$ independent sets of size $\lambda$. The maps $\varphi_{12}$, $\varphi_{23}$, $\varphi_{31}$, $\varphi_{21} = \varphi_{12}^{-1}$, $\varphi_{32} = \varphi_{23}^{-1}$ and $\varphi_{13}=\varphi_{31}^{-1}$ can be regarded as permutations of the set $\{ 1,2,\ldots,\lambda+2 \}$ such that $\varphi_{31} \circ \varphi_{23} \circ \varphi_{12}$ is trivial. The vertex set of $\overline{\Gamma}$ is $V_1 \cup V_2 \cup V_3$. Two vertices of the same $V_i$ are adjacent in $\overline{\Gamma}$ whenever they are adjacent in $\Gamma_i$, and vertices $x \in V_i$ and $y \in V_j$ with $i \not= j$ are adjacent whenever $y \in A^{(j)}_{\varphi_{ij}(l)}$ if $A^{(i)}_l$ is the unique element of $\mathcal{P}_i$ containing $x$. 

Consider now the special case $(\lambda,\mu)=(1,1)$. Then $\Gamma_1$, $\Gamma_2$ and $\Gamma_3$ are edge-regular graphs with parameters $(n,k,\lambda)=(3,2,1)$ and therefore isomorphic to $K_3$. In this case $\overline{\Gamma}$ is the $3 \times 3$ rook graph.

We prove that the case $(\lambda,\mu)=(2,1)$ cannot occur by showing that there are no edge-regular graphs $\Gamma$ with parameters $(n,k,\lambda)=(8,4,2)$. Note that any local graph of $\Gamma$ is a 2-regular graph on 4 vertices and therefore isomorphic to $C_4$. Take a vertex $x$ and denote by $x_1,x_2,x_3,x_4$ the four neighbours of $x$ such that $x_1 \sim x_2 \sim x_3 \sim x_4 \sim x_1$. 

We show that any three consecutive vertices of the local graph $\Gamma_x$, say $x_1$, $x_2$ and $x_3$, have besides $x$ a second common neighbour. Denote by $\widetilde{x}$ the fourth neighbour of $x_2$ besides $x$, $x_1$ and $x_3$. The vertices $\widetilde{x}$, $x_1$ and $x_3$ are three vertices of the local graph $\Gamma_{x_2}$ which is isomorphic to $C_4$. As $x_1$ and $x_3$ are not adjacent, the vertex $\widetilde{x}$ needs to be adjacent to both $x_1$ and $x_3$. 

Applying the above to any three consecutive vertices of the local graph $\Gamma_x$, we see that the vertex $\widetilde{x}$ needs to be adjacent to $x_1$, $x_2$, $x_3$ and $x_4$. But then the graph induced on $\{ x,x_1,x_2,x_3,x_4,\tilde{x} \}$ is the octahedron graph, which is 4-regular on 6 vertices. This graph cannot be extended to a 4-regular graph on 8 vertices.

\medskip We now discuss the case $(\lambda,\mu)=(1,3)$. 

Consider first the unique generalized quadrangle $\mathcal{Q}$ of order $(2,4)$ as defined in \cite{SP-JAT}. This generalized quadrangle has the property that every line is incident with exactly three points, every point is incident with exactly five lines and every two noncollinear points have exactly five common neighbours. For every point $x$ of $\mathcal{Q}$ we denote by $x^\perp$ the set of points of $\mathcal{Q}$ collinear or equal to $x$. The generalized quadrangle $\mathcal{Q}$ is known to have subgeometries that are $3 \times 3$ subgrids. The collinearity graphs of such subgrids are $3 \times 3$ rook graphs.

A $3 \times 3$ subgrid $G$ of $\mathcal{Q}$ has six {\em ovoids}. These are sets consisting of three mutually noncollinear points, i.e. they are independent sets of size 3 in the corresponding $3 \times 3$ rook graph. $G$ has two partitions $\mathcal{P}$ and $\mathcal{P}'$ in ovoids and it holds that $|O \cap O'|=1$ for all $(O,O') \in \mathcal{P} \times \mathcal{P}'$. If $x$ is a vertex outside $G$, then $x^\perp \cap G$ is an ovoid of $G$, the so-called {\em ovoid subtended} by $x$. Any two noncollinear points $x_1$ and $x_2$ of $G$ are contained in a unique ovoid of $G$, and they have five common neighbours among which two are contained in $G$ and three are outside $G$. This implies that every ovoid of $G$ is subtended by exactly three points outside $G$.

By \cite[Lemma 1.38]{BDB}, the generalized quadrangle $\mathcal{Q}$ has a partition $\{ G_1,G_2,G_3 \}$ in three $3 \times 3$ subgrids. Take such a subgrid $G_i$, $i \in \{ 1,2,3 \}$. If $L = \{ x_1,x_2,x_3 \}$ is a line disjoint from $G_i$, then $\{ x_1^\perp \cap G_i,x_2^\perp \cap G_i,x_3^\perp \cap G_i \}$ is a partition of $G_i$ in ovoids. Applying this to the lines of a subgrid $G_j$ with $j \in \{ 1,2,3 \} \setminus \{ i \}$, we see that all nine ovoids $x^\perp \cap G_i$ with $x \in G_j$ are part of the same partition $\mathcal{P}_i$ of $G_i$ in ovoids. The three points subtending a particular ovoid in $\mathcal{P}_i$ are therefore all contained in $G_j$. If $G_l$ is the remaining grid, then we see that the nine ovoids $x^\perp \cap G_i$ with $x \in G_l$ are part of the other partition $\mathcal{P}_i'$ of $G_i$ in ovoids.

The above implies that if we apply the construction of Section \ref{sec2} to three $3 \times 3$ rook graphs $\Gamma_1$, $\Gamma_2$ and $\Gamma_2$, then for a suitable choice of the partitions $\mathcal{P}_1$, $\mathcal{P}_1'$, $\mathcal{P}_2$, $\mathcal{P}_2'$, $\mathcal{P}_3$, $\mathcal{P}_3'$, and the maps $\varphi_{12}$, $\varphi_{23}$, $\varphi_{31}$, we can get a Neumaier graph with parameters (27,10,1;1,3), which is isomorphic to the collinearity graph of the unique generalized quadrangle of order $(2,4)$. This collinearity graph is the complement of the Sch\"afli graph.

Conversely, take the construction from Section \ref{sec2} in the case that $\lambda=1$ and $\mu=3$. Then $\Gamma_1$, $\Gamma_2$ and $\Gamma_3$ are edge-regular graphs with parameters $(n,k,\lambda)=(9,4,1)$, necessarily isomorphic to $3 \times 3$ rook graphs. Any Neumaier graph $\overline{\Gamma}$ arising from the construction must have parameters $(27,10,1;1,3)$. The maximal cliques have thus size $3$. Regarding these maximal cliques as the lines of a point-line geometry $\mathcal{Q}'$ with the vertices of $\overline{\Gamma}$ as points, we observe that $\mathcal{Q}'$ satisfies the following: (i) every line of $\mathcal{Q}'$ has three points and every point of $\mathcal{Q}'$ is contained in five lines; (ii) for every non-incident point-line pair $(x,L)$ of $\mathcal{Q}'$, there exists a unique point on $L$ collinear with $x$. $\mathcal{Q}'$ is therefore isomorphic to the unique generalized quadrangle of order $(2,4)$. $\overline{\Gamma}$ is the collinearity graph of $\mathcal{Q}$ and is therefore isomorphic to the complement of the Sch\"afli graph.

\section{Treatment of the case $(\lambda,\mu)=(2,2)$} \label{sec4}

Consider again the construction from Section \ref{sec2}, but this time with $\lambda=\mu=2$. Let $\Gamma = \Gamma_i$ for some $i \in \{ 1,2,3 \}$, and put $\Omega := \Omega_i$, $\mathcal{P} := \mathcal{P}_i$, $\mathcal{P}' := \mathcal{P}_i'$. Recall that $\Gamma$ is an edge-regular graph with parameters $(v,k,\lambda) = (\mu \lambda (\lambda+2),\lambda(\mu+1),\lambda)=(16,6,2)$. Put $\mathcal{P} = \{ P_1,P_2,P_3,P_4 \}$, $\mathcal{P}' = \{ P_1',P_2',P_3',P_4' \}$ and $\Omega = \{ \omega_1,\omega_2 \}$ with $\omega_1 = P_1 \cup P_2 = P_1' \cup P_2'$ and $\omega_2 = P_3 \cup P_4 = P_3' \cup P_4'$. As $|P_i \cap P_j'|=2$ for all $i,j \in \{ 1,2 \}$ and all $i,j \in \{ 3,4 \}$, we can label the vertices of $\Gamma$ by $v_1,v_2,\ldots,v_{16}$ such that 
\[   P_1 = \{ v_1,v_3,v_5,v_7  \},\quad P_2 = \{ v_2,v_4,v_6,v_8 \},\quad P_1' = \{ v_1,v_3,v_6,v_8 \}, \quad P_2' = \{ v_2,v_4,v_5,v_7 \}, \]
\[   P_3 = \{ v_9,v_{11},v_{13},v_{15} \},P_4 = \{ v_{10},v_{12},v_{14},v_{16} \},P_3' = \{ v_9,v_{11},v_{14},v_{16} \},P_4' = \{ v_{10},v_{12},v_{13},v_{15} \}. \]
Since $P_1,P_2,\ldots,P_3',P_4'$ are independent sets, the only possible edges between vertices belonging to the same $\omega_i$, $i \in \{ 1,2 \}$, are the 8 edges $\{ v_1,v_2 \}$, $\{ v_2,v_3 \}$, $\{ v_3,v_4 \}$, $\{ v_1,v_4 \}$, $\{ v_5,v_6 \}$, $\{ v_6,v_7 \}$, $\{ v_7,v_8 \}$, $\{ v_5,v_8 \}$ and the 8 edges $\{ v_9,v_{10} \}$, $\{ v_{10},v_{11} \}$, $\{ v_{11},v_{12} \}$, $\{ v_9,v_{12} \}$, $\{ v_{13},v_{14} \}$, $\{ v_{14},v_{15} \}$, $\{ v_{15},v_{16} \}$, $\{ v_{13},v_{16} \}$. So, there are most $(8 + 8) \cdot \lambda =32$ triangles in $\Gamma$ containing two vertices of the same $\omega_i$. Note also that each triangle in $\Gamma$ necessarily contains two vertices of the same $\omega_i$ and one vertex of the other $\omega_j$ in $\Omega$. All together, there are exactly $\frac{1}{6} vk\lambda = 32$ triangles in $\Gamma$, implying that each of the above 16 pairs are actually edges of $\Gamma$. This means that the triple $(\Gamma,\omega_1,\omega_2)$ satisfies the following properties:
\begin{enumerate}
\item[(A)] $\Gamma$ is an edge-regular graph with parameters $(v,k,\lambda)=(16,6,2)$;
\item[(B)] $\{ \omega_1,\omega_2 \}$ is a partition of the vertex set $V$ of $\Gamma$ in two sets of size 8 such that the induced subgraph on each $\omega_i$ is the disjoint union of two $C_4$'s.
\end{enumerate}
We now classify, up to graph isomorphisms, all triples $(\Gamma,\omega_1,\omega_2)$ that satisfy the above properties (A) and (B). We show that there are 12 such triples.

Since each local graph $\Gamma_x$, $x \in V$, is a 2-regular graph on 6 vertices, it is isomorphic to either $C_6$ or the union to two $K_3$'s. In the former case, $x$ is not contained in any 4-clique, and in the latter case, $x$ is contained in exactly two 4-cliques which meet in the singleton $\{ x \}$.

We show that if one local space $\Gamma_x$, $x \in V$, is the union of two disjoint $K_3$'s, then $\Gamma$ is the $4 \times 4$-rook graph. Let $L_1$ and $M_1$ denote the two 4-cliques through $x$. Every vertex of $L_1$ is contained in a 4-clique and hence is contained in exactly two $4$-cliques, resulting in four distinct 4-cliques $M_1$, $M_2$, $M_3$, $M_4$ that meet $L_1$ in singletons. Suppose two of these 4-cliques, say $M_{i_1}$ and $M_{i_2}$, meet in a vertex $u$, necessarily outside $L_1$. Then the two adjacent vertices in $L_1 \cap M_{i_1}$ and $L \cap M_{i_2}$ would have at least three common neighbors (namely, $u$ and two in $L_1$), a contradiction. So, $\{ M_1,M_2,M_3,M_4 \}$ form a partition of the vertex set in four 4-cliques. Similarly, there exists a partition $\{ L_1,L_2,L_3,L_4 \}$ of the vertex set in four 4-cliques that meet $M_1$ in singletons. These eight 4-cliques already cover $8 \cdot {4 \choose 2} = 48$ edges of $\Gamma$, i.e. all $\frac{1}{2}vk=48$ edges of $\Gamma$. As two distinct 4-cliques never meet in more than 1 vertex, we must have $|L_i \cap M_j| = 1$ for all $i,j \in \{ 1,2,3,4 \}$. It is now clear that $\Gamma$ must be a $4 \times 4$ rook graph. Note also the $4 \times 4$ rook graph has up to graph isomorphisms a unique ordered pair $(\omega_1,\omega_2)$ for which (B) holds. This already accounts for one of the 12 possible triples we alluded to. 

\medskip From now on, we assume that all local graphs are isomorphic to $C_6$. The subgraph induced on $\omega_i$, $i \in \{ 1,2 \}$, is the union of two 4-cycles $C_{ia}$ and $C_{ib}$. Put $\mathcal{C}= \{ C_{1a},C_{1b},C_{2a},C_{2b} \}$.

Consider a vertex $v$ of $\Gamma$ belonging to a certain $\omega_i$, $i \in \{ 1,2 \}$, and let $C$ denote the unique element of $\mathcal{C}$ containing $v$. Denote by $v'$ and $v''$ the two neighbors of $v$ in $C$, and by $C'$ and $C''$ the two elements of $\mathcal{C}$ contained in $\omega_{3-i}$. Note that $v'$ and $v''$ are nonadjacent in the local graph $\Gamma_v$. One of the following two cases then occurs.
\begin{itemize}
\item Suppose $v'$ and $v''$ have distance 2 in $\Gamma_v$. Then $v'$ and $v''$ have a unique neighbor $w$ in $\Gamma_v$ belonging to one of the two 4-cycles in $\omega_{3-i}$, say $C'$. The three vertices of $\Gamma_v$ distinct from $v'$, $v''$ and $w$ define three consecutive vertices in $\Gamma_v$, necessarily in the other 4-cycle $C''$ of $\omega_{3-i}$ as each of them is nonadjacent to $w$. The vertices $v$ and $v'$ have two neighbors, one of them is contained in $C'$ (namely $w$) and the other in $C''$. The same holds for $v$ and $v''$. There are two edges in $\Gamma_v$ not containing $v'$ nor $v''$. Both are contained in $C''$ (and none in $C'$).
\item Suppose $v'$ and $v''$ have distance 3 in $\Gamma_v$. There are two edges in $\Gamma_v$ not containing $v'$ nor $v''$, and vertices belonging to distinct edges cannot be adjacent, showing that one of these two edges is contained in $C'$, while the other is contained in $C''$ . The vertex $v$ has thus two neighbors in $C'$ and two neighbors in $C''$. The vertices $v$ and $v'$ have two neighbors, one of them is contained in $C'$ and the other in $C''$. The same holds for $v$ and $v''$. 
\end{itemize}
Let $C_1$ and $C_2$ be two elements of $\mathcal{C}$ belonging to distinct $\omega_i$'s. If $x$ and $y$ are two adjacent vertices belonging to some $C_i$, $i \in \{ 1,2 \}$, then by the above, $x$ and $y$ have a unique common neighbor in $C_{3-i}$.

Let $r$, $s$, $t$ and $u$ be two vertices of $C_1$ and suppose that $r \sim s \sim t \sim u \sim r$. Let $E_x$ with $x \in \{ r,s,t,u \}$ be the set of edges of $C_2$ that are in the neighborhood of $x$. By the above, we have $|E_x| \in \{ 0,1,2 \}$. If $x$ and $y$ are two adjacent vertices in $C_1$, then as $x$ and $y$ have a unique common neighbor in $C_2$, the union of the edges in $E_x$ and those in $E_y$ have a unique vertex in common if $|E_x|,|E_y| \geq 1$. As any two adjacent vertices in $C_2$ have a unique common neighbor in $C_1$, we have $E_x \cap E_y = \emptyset$ for distinct $x,y \in C_1$, and $E_r \cup E_s \cup E_t \cup E_u$ consists of the 4 edges of $C_2$. This leads to the following cases:
\begin{enumerate}
\item[(1)] $|E_r|=|E_s|=|E_t|=|E_u|=1$;
\item[(2)] two of $|E_r|$, $|E_s|$, $|E_t|$, $|E_u|$ are equal to 2, while the other two are equal to 0;
\item[(3)] one of $|E_r|$, $|E_s|$, $|E_t|$, $|E_u|$ is equal to 2, two are equal to 1, and one is equal on 0;  
\end{enumerate}
Suppose case (1) occurs. Then we can label the vertices of $C_2$ by $R$, $S$, $T$, $U$ such that $R \sim S \sim T \sim U \sim R$, $E_r = \{ RS \}$, $E_s = \{ ST \}$, $E_t = \{ TU \}$ and $E_u = \{ RU \}$.

Suppose case (2) occurs. Without loss of generality, we may assume that $|E_r|=2$. Then we can label the vertices of $C_2$ by $R$, $S$, $T$ and $U$ such that $R \sim S \sim T \sim U \sim R$, $E_r = \{ RS,ST \}$, $E_s = \emptyset$, $E_t = \{ TU,RU \}$ and $E_u = \emptyset$.

Suppose case (3) occurs. Without loss of generality, we may assume that $|E_r|=2$ and either $|E_u|=0$ or $|E_t|=0$. Then we can label the vertices of $C_2$ by $R$, $S$, $T$ and $U$ such that $R \sim S \sim T \sim U \sim R$ and either one of the following occurs:
\begin{enumerate}
\item[(3a)] $E_r = \{ RS,ST \}$, $E_s = \{ TU \}$, $E_t = \{ UR \}$ and $E_u = \emptyset$;
\item[(3b)] $E_r = \{ RS,ST \}$, $E_s = \{ TU \}$, $E_t = \emptyset$ and $E_u = \{ RU \}$.
\end{enumerate}
We say that $(C_1,C_2)$ of type (1), (2), (3a) or (3b) depending on which of the above cases occurs. Let $C_2' \in \mathcal{C}$ such that $C_2 \cup C_2' \in \Omega$. We now show that $(C_2,C_1)$ and $(C_1,C_2')$ have the same type as $(C_1,C_2)$.

We start with $(C_2,C_1)$. After reversing the roles of $C_1$ and $C_2$, we can define $E_R$, $E_S$, $E_T$ and $E_U$ in a similar way.

Suppose case (1) occurs. Then the edges between $C_1$ and $C_2$ are $\{ r,R \}$, $\{ r,S \}$, $\{ s,S \}$, $\{ s,T \}$, $\{ t,T \}$, $\{ t,U \}$, $\{ u,R \}$, $\{ u,U \}$, implying that $E_R = \{ ur \}$, $E_S = \{ rs \}$, $E_T = \{ st \}$ and $E_U = \{ ut \}$. So, also $(C_2,C_1)$ has type (1).

Suppose case (2) occurs. Then $s$ has a unique neighbor in $C_2$, which must also be a neighbor of $r$ and $t$, as $r$ and $s$ have a unique neighbor in $C_2$, as well as $s$ and $t$. This unique neighbor therefore must coincide with either $R$ or $T$, say $T$. In a similar way, the unique neighbor of $u$ in $C_2$ is equal to either $R$ or $T$. As this unique neighbor is adjacent with at most 3 vertices of $C_1$, it cannot be $T$ and hence must be $R$. The edges between $C_1$ and $C_2$ are therefore $\{ r,R \}$, $\{ r,S \}$, $\{ r,T \}$, $\{ s,T \}$, $\{ t,T \}$, $\{ t,U \}$, $\{ t,R \}$ and $\{ u,R \}$, implying that $E_R = \{ tu,ur \}$, $E_S = \emptyset$, $E_T = \{ rs,st \}$ and $E_U = \emptyset$. So, also $(C_2,C_1)$ has type (2).

Suppose case (3a) occurs. Similarly as in case (2), we can then argue that $R$ is the unique neighbor of $u$ inside $C_2$. The edges between $C_1$ and $C_2$ are therefore $\{ r,R \}$, $\{ r,S \}$, $\{ r,T \}$, $\{ s,T \}$, $\{ s,U \}$, $\{ t,U \}$, $\{ t,R \}$ and $\{ u,R \}$, implying that $E_R = \{ tu,ur \}$, $E_S = \emptyset$, $E_T = \{ rs \}$ and $E_U = \{ st \}$. So, also $(C_2,C_1)$ has type (3a).

Suppose case (3b) occurs. Similarly as in case (2), we can then argue that $U$ is the unique neighbor of $t$ inside $C_2$. The edges between $C_1$ and $C_2$ are therefore $\{ r,R \}$, $\{ r,S \}$, $\{ r,T \}$, $\{ s,T \}$, $\{ s,U \}$, $\{ t,U \}$, $\{ u,R \}$ and $\{ u,U \}$, implying that $E_R = \{ ur \}$, $E_S = \emptyset$, $E_T = \{ rs \}$ and $E_U = \{ st,tu \}$. So, also $(C_2,C_1)$ has type (3b).

In any case, $(C_2,C_1)$ has the same type as $(C_1,C_2)$. From the above, we also observe the following:
\begin{itemize}
\item if $(C_1,C_2)$ has type (1), then the values that $|\Gamma_1(x) \cap C_2|$ takes for $x$ ranging in $C_1$ are 2, 2, 2 and 2.
\item if $(C_1,C_2)$ has type (2), then the values that $|\Gamma_1(x) \cap C_2|$ takes for $x$ ranging in $C_1$ are 1, 1, 3 and 3.
\item if $(C_1,C_2)$ has type (3a), then the values that $|\Gamma_1(x) \cap C_2|$ takes for $x$ ranging in $C_1$ are 1, 2, 2 and 3, with the two vertices of $C_1$ giving rise to the values 1 and 3 being adjacent.
\item if $(C_1,C_2)$ has type (3b), then the values that $|\Gamma_1(x) \cap C_2|$ takes for $x$ ranging in $C_1$ are 1, 2, 2 and 3, with the two vertices of $C_1$ giving rise to the values 1 and 3 being nonadjacent.
\end{itemize}
As $|\Gamma_1(x) \cap C_2| + |\Gamma_1(x) \cap C_2'| = 4$ for every $x \in C_1$, we then see that $(C_1,C_2')$ must have the same type as $(C_1,C_2)$.

For ($t$) being equal to (1), (2), (3a) or (3b), we say that $(\omega_1,\omega_2)$ have type ($t$), if $(C_1,C_2)$ has type ($t$) for any $C_1 \in \mathcal{C}$ contained in $\omega_1$ and any $C_2 \in \mathcal{C}$ contained in $\omega_2$. By the above, this is well-defined.

We make another important observation, which we easily derive from the above discussion:
\begin{itemize}
\item Suppose $(C_1,C_2)$ is of type (2), (3a) or (3b). Then there is a unique isomorphism $\mathrm{I}_{C_1,C_2}$ between the 4-cycles $C_1$ and $C_2$, which maps every vertex $x \in C_1$ for which $|\Gamma_1(x) \cap C_2|=1$ to the unique vertex in $\Gamma_1(x) \cap C_2$, every vertex $y \in C_1$ for which $|\Gamma_1(y) \cap C_2|=2$ to one of the two vertices in $\Gamma_1(y) \cap C_2$, and every vertex $z \in C_1$ for which $|\Gamma_1(z) \cap C_2|=3$ to the unique vertex in $\Gamma_1(z) \cap C_2$ that is adjacent to the other two. 
\item Suppose $(C_1,C_2)$ is of type (1). Then we denote by $\mathrm{I}_{C_1,C_2}$ the map which sends each vertex $x$ of $C_1$ to the edge $\Gamma_1(x) \cap C_2$ of $C_2$ and each edge $yz$ of $C_1$ to the unique vertex in $C_2$ adjacent to $y$ and $z$. Then $\mathrm{I}_{C_1,C_2}$ induces an isomorphism between the 4-cycle $C_1$ and the line graph of the 4-cycle $C_2$ and an isomorphism between the line graph of the 4-cycle $C_1$ and the 4-cycle $C_2$. 
\end{itemize}
A straightforward argument shows that $\mathrm{I}_{C_2,C_1} = \mathrm{I}_{C_1,C_2}^{-1}$. It is now clear that if $\mathcal{C} = \{ C_1,C_2,C_3,C_4 \}$ with $\Omega = \{ C_1 \cup C_3,C_2 \cup C_4 \}$, then $\Delta_{1234}:=\mathrm{I}_{C_1,C_2} \mathrm{I}_{C_2,C_3} \mathrm{I}_{C_3,C_4} \mathrm{I}_{C_4,C_1}$ is an automorphism of the 4-cycle $C_1$. Note that the following holds:
\begin{enumerate}
\item[(i)] $\Delta_{1432} = \mathrm{I}_{C_1C_4} \mathrm{I}_{C_4 C_3} \mathrm{I}_{C_3 C_2} \mathrm{I}_{C_2 C_1} = \Delta_{1234}^{-1}$;
\item[(ii)] $\Delta_{2341} = \mathrm{I}_{C_2C_3} \mathrm{I}_{C_3 C_4} \mathrm{I}_{C_4 C_1} \mathrm{I}_{C_1 C_2} = \mathrm{I}_{C_1 C_2}^{-1} \Delta_{1234} \mathrm{I}_{C_1 C_2}$.
\end{enumerate}
For a given type $t$ equal to (1), (2), (3a) or (3b), we now show how to construct up to graph isomorphisms all triples $(\Gamma,\omega_1,\omega_2)$ for which the above conditions (A) and (B) hold. We first describe the method, and then work out each type explicitly. For given ($t$), there is a unique way, up to a relabeling of the vertices in $C_2$, to make the connections between the vertices of $C_1$ and $C_2$. Given these connections, there is then a unique way, up to a relabeling of the vertices in $C_3$, to make the connections between $C_2$ and $C_3$. Giving these extra connections, there is a unique way, up to a relabeling of the vertices in $C_4$, to make the connections between $C_3$ and $C_4$. Finally given all these connections and the value  of $\varphi := \Delta_{1234}$, there will be at most one way to make the final connections between vertices of $C_4$ and $C_1$. The final step (see later) is not always possible for each $\varphi \in D_8$ (if the resulting graph cannot be edge-regular with parameters $(16,6,2)$), but if it is possible then it is only possible in a unique way and we denote the resulting graph by $\Gamma_{t,\varphi}$. Another issue that we need to address is that distinct $\varphi$'s from $D_8$ can result in isomorphic triples $(\Gamma_{t,\varphi},\omega_1,\omega_2)$.

Now, note that $D_8$ has 5 conjugacy classes. Regarding them as sets of automorphisms of the cycle graph with vertices $1 \sim 2 \sim 3 \sim 4 \sim 1$, they are the following:
\[ \{ () \},\{ (1234),(1432) \}, \{ (13)(24) \},\{ (13),(24) \},\{ (12)(34),(14)(23) \}. \]
Note that each of those conjugacy classes is closed under taking inverses. By (i) and (ii) above, we thus see that if $\varphi_1$ and $\varphi_2$ belong to distinct conjugacy classes, then $(\Gamma_{t,\varphi_1},\omega_1,\omega_2)$ and $(\Gamma_{t,\varphi_2},\omega_1,\omega_2)$ can never be isomorphic. By (i), we also know the following. If $\Gamma_{t,\varphi}$ is an edge-regular graph with parameters $(16,6,2)$ for some $\varphi \in \{ (1234),(1432) \}$, then it is also an edge-regular graph for the other element in $\{ (1234),(1432) \}$ and moreover, both associated triples $(\Gamma_{t,\varphi},\omega_1,\omega_2)$ are isomorphic.

\medskip Suppose first the case of type (1). We can relabel the vertices in $C_1$, $C_2$, $C_3$ and $C_4$ such that $\Gamma_{v_1} \cap C_2 = \{ v_9,v_{10} \}$, $\Gamma_{v_2} \cap C_2 = \{ v_{10},v_{11} \}$, $\Gamma_{v_3} \cap C_2 = \{ v_{11},v_{12} \}$, $\Gamma_{v_4} \cap C_2 =\{ v_9,v_{12} \}$, $\Gamma_{v_9} \cap C_3 = \{ v_5,v_6 \}$, $\Gamma_{v_{10}} \cap C_3 = \{ v_6,v_7 \}$, $\Gamma_{v_{11}} \cap C_3 = \{ v_7,v_8 \}$, $\Gamma_{v_{12}} \cap C_3 = \{ v_5,v_8 \}$, $\Gamma_{v_5} \cap C_4 = \{ v_{13},v_{14} \}$, $\Gamma_{v_6} \cap C_4 = \{ v_{14},v_{15} \}$, $\Gamma_{v_7} \cap C_4 = \{ v_{15},v_{16} \}$ and $\Gamma_{v_8} \cap C_4 = \{ v_{13},v_{16} \}$. If we put $\varphi:= \Delta_{1234} \in D_8$, then we must have that $\Gamma_{v_{13}} \cap C_1 = \{ v_{3^\varphi},v_{4^\varphi} \}$, $\Gamma_{v_{14}} \cap C_1 = \{ v_{1^\varphi},v_{4^\varphi} \}$, $\Gamma_{v_{15}} \cap C_1 = \{ v_{1^\varphi},v_{2^\varphi} \}$ and $\Gamma_{v_{16}} \cap C_1 = \{ v_{2^\varphi},v_{3^\varphi} \}$. For each $\varphi \in D_8$, all vertices in $C_1$ have degree $6$; and we have already made sure by construction that all edges are in exactly two triangles. We have already remarked earlier that $(1234)$ and $(1432)$ give rise to isomorphic examples. By symmetry, also $(13)$ and $(24)$ give rise to isomorphic examples, as well as $(12)(34)$ and $(14)(23)$. This case then contributes five triples $(\Gamma,\omega_1,\omega_2)$, namely those where $\Gamma \in \{ \Gamma_{1,()},\Gamma_{1,(1234)},\Gamma_{1,(13)(24)},\Gamma_{1,(13)},\Gamma_{1,(12)(34)} \}$. 

\medskip Suppose next the case of type (2). After relabeling the vertices, we may in this case assume that $\Gamma_{v_1} \cap C_2 = \{ v_9,v_{10},v_{11} \}$, $\Gamma_{v_2} \cap C_2 = \{ v_{11} \}$, $\Gamma_{v_3} \cap C_2 = \{ v_9,v_{11},v_{12} \}$, $\Gamma_{v_4} \cap C_2 = \{ v_9 \}$, $\Gamma_{v_9} \cap C_3 = \{ v_5 \}$, $\Gamma_{v_{10}} \cap C_3 = \{ v_5,v_6,v_7 \}$, $\Gamma_{v_{11}} \cap C_3 = \{ v_7 \}$, $\Gamma_{v_{12}} \cap C_3 = \{ v_5,v_7,v_8 \}$, $\Gamma_{v_5} \cap C_4 = \{ v_{13} \}$, $\Gamma_{v_6} \cap C_4 = \{ v_{13},v_{14},v_{15} \}$, $\Gamma_{v_7} \cap C_4 = \{ v_{15} \}$ and $\Gamma_{v_8} \cap C_4 = \{ v_{13},v_{15},v_{16} \}$. If we put $\varphi:= \Delta_{1234} \in D_8$, then we must have that  $\Gamma_{v_{13}} \cap C_1 = \{ v_{4^\varphi} \}$, $\Gamma_{v_{14}} \cap C_1 = \{ v_{1^\varphi},v_{2^\varphi},v_{4^\varphi} \}$, $\Gamma_{v_{15}} \cap C_1 = \{ v_{2^\varphi} \}$ and $\Gamma_{v_{16}} \cap C_1 = \{ v_{2^\varphi},v_{3^\varphi},v_{4^\varphi} \}$. For the vertices in $C_1$ to have degree 6, we must have that $\{ 1^\varphi,3^\varphi \} = \{ 1,3 \}$ and $\{ 2^\varphi,4^\varphi \} = \{ 2,4 \}$. Note that we have already made sure that all edges are in exactly two triangles. There are thus four possibilities for $\varphi$, namely $()$, $(13)$, $(24)$ and $(13)(24)$. The permutations $()$, $(13)$ and $(13)(24)$ give rise to mutually nonisomorphic triples $(\Gamma,\omega_1,\omega_2)$. We now show that $(13)$ and $(24)$ give rise to isomorphic triples $(\Gamma,\omega_1,\omega_2)$. The distinction between the two cases $(13)$ and $(24)$ is as follows: if $\varphi=(13)$, then $\Delta_{1234}$ interchanges the two vertices (namely 1 and 3) that are adjacent with exactly three vertices of $C_2$, while if $\varphi = (24)$, then $\Delta_{1234}$ interchanges the two vertices (namely 2 and 4) that are adjacent with exactly one vertex of $C_2$. However, if $\varphi = (24)$, then $\Delta_{1432} = \Delta_{1234}^{-1}=(24)$ interchanges the two vertices of $C_1$ that are adjacent to three vertices of $C_4$, yielding a similar situation as for the permutation $(13)$ but with the roles of $C_2$ and $C_4$ interchanged. So, there is a graph isomorphism between $\Gamma_{2,(13)}$ and $\Gamma_{2,(24)}$ fixing each of $C_1$, $C_3$ (and thus also $\omega_1$ and $\omega_2$), but interchanging $C_2$ and $C_4$.

We conclude that this case contributes exactly three triples $(\Gamma,\omega_1,\omega_2)$, namely those where $\Gamma \in \{ \Gamma_{2,()},,\Gamma_{2,(13)(24)},\Gamma_{2,(13)} \}$.     

\medskip Suppose next the case of type (3a). Then we may assume that $\Gamma_{v_1} \cap C_2 = \{ v_9,v_{10},v_{11} \}$, $\Gamma_{v_2} \cap C_2 = \{ v_{11},v_{12} \}$, $\Gamma_{v_3} \cap C_2 = \{ v_9,v_{12} \}$, $\Gamma_{v_4} \cap C_2 = \{ v_9 \}$, $\Gamma_{v_9} \cap C_3 = \{ v_5 \}$, $\Gamma_{v_{10}} \cap C_3 = \{ v_5,v_6,v_7 \}$, $\Gamma_{v_{11}} \cap C_3 = \{ v_7,v_8 \}$, $\Gamma_{v_{12}} \cap C_3 = \{ v_5,v_8 \}$, $\Gamma_{v_5} \cap C_4 = \{ v_{13} \}$, $\Gamma_{v_6} \cap C_4 = \{ v_{13},v_{14},v_{15} \}$, $\Gamma_{v_7} \cap C_4 = \{ v_{15},v_{16} \}$ and $\Gamma_{v_8} \cap C_4 = \{ v_{13},v_{16} \}$. If we put $\varphi := \Delta_{1234} \in D_8$, then we must have that $\Gamma_{v_{13}} \cap C_1 = \{ v_{4^\varphi} \}$, $\Gamma_{v_{14}} \cap C_1 = \{ v_{1^\varphi},v_{2^\varphi},v_{4^\varphi} \}$, $\Gamma_{v_{15}} \cap X_1 = \{ v_{2^\varphi},v_{3^\varphi} \}$ and $\Gamma_{v_{16}} \cap C_1 = \{ v_{3^\varphi},v_{4^\varphi} \}$. For all vertices in $C_1$ to have degree 6, we must have that $\varphi = ()$. Note that we have already made sure that all triangles are in exactly two triangles. So, this case only contributes one triple $(\Gamma_{3a,()},\omega_1,\omega_2)$.

\medskip Suppose next the case of type (3b). Then we may assume that $\Gamma_{v_1} \cap C_2 = \{ v_9,v_{10},v_{11} \}$, $\Gamma_{v_2} \cap C_2 = \{ v_{11},v_{12} \}$, $\Gamma_{v_3} \cap C_2 = \{ v_{12} \}$, $\Gamma_{v_4} \cap C_2 = \{ v_9,v_{12} \}$, $\Gamma_{v_9} \cap C_3 = \{ v_5,v_8 \}$, $\Gamma_{v_{10}} \cap C_3 = \{ v_5,v_6,v_7 \}$, $\Gamma_{v_{11}} \cap C_3 =\{ v_7,v_8 \}$, $\Gamma_{v_{12}} \cap C_3 = \{ v_8 \}$, $\Gamma_{v_5} \cap C_4 = \{ v_{13},v_{16} \}$, $\Gamma_{v_6} \cap C_4 = \{ v_{13},v_{14},v_{15} \}$, $\Gamma_{v_7} \cap C_4 = \{ v_{15},v_{16} \}$ and $\Gamma_{v_8} \cap C_4 = \{ v_{16} \}$. If we put $\varphi:=\Delta_{1234} \in D_8$, then we must have $\Gamma_{v_{13}} \cap C_1 = \{ v_{3^\varphi},v_{4^\varphi} \}$, $\Gamma_{v_{14}} \cap C_1 = \{ v_{1^\varphi},v_{2^\varphi},v_{4^\varphi} \}$, $\Gamma_{v_{15}} \cap C_1 =\{ v_{2^\varphi},v_{3^\varphi} \}$ and $\Gamma_{v_{16}} \cap C_1 = \{ v_{3^\varphi} \}$. For all vertices in $C_1$ to have degree 6, we need $\{ 1^\varphi,3^\varphi \} = \{ 1,3 \}$ and so $\varphi \in \{ (),(24) \}$. Note that we have already made sure that all triangles are in exactly two triangles. So, this case contributes two triples $(\Gamma,\omega_1,\omega_2)$, namely those where $\Gamma \in \{ \Gamma_{3b,()},\Gamma_{3b,(24)} \}$. 

\medskip We have also computationally verified that there are in total 11 nonisomorphic triples $(\Gamma,\omega_1,\omega_2)$ satisfying (A) and (B) above, with $\Gamma$ not a $4 \times 4$-rook graph, by considering all 32 possible combinations of the four types 1, 2, 3a, 3b, and the 8 permutations $\varphi \in D_8$. Our computations hereby confirmed the nonexistences and isomorphisms derived above.

\begin{table}
\begin{center}
\begin{tabular}{|c|c||c|c||c|c|}
\hline
Type & Description & Type & Description & Type & Description \\
\hline  
\hline
Type 1 & $4 \times 4$ rook graph & Type 5 & $\Gamma_{1,(13)}$ & Type 9 & $\Gamma_{2,(13)}$ \\
\hline
Type 2 & $\Gamma_{1,()}$ & Type 6 & $\Gamma_{1,(12)(34)}$ & Type 10 & $\Gamma_{3a,()}$ \\
\hline
Type 3 & $\Gamma_{1,(1234)}$ & Type 7 & $\Gamma_{2,()}$ & Type 11 & $\Gamma_{3b,()}$ \\
\hline
Type 4 & $\Gamma_{1,(13)(24)}$ & Type 8 & $\Gamma_{2,(13)(24)}$ & Type 12 & $\Gamma_{3b,(24)}$ \\
            & Shrikhande graph & & & &  \\
\hline
\end{tabular}
\end{center}
\caption{Classification of the triples $(\Gamma,\omega_1,\omega_2)$ satisfying (A) and (B)} \label{tab1}
\end{table}

\begin{proposition}
The Shrikhande graph has up to isomorphisms one ordered pair $(\omega_1,\omega_2)$ for which property (B) holds. This situation corresponds to Type 4 in Table \ref{tab1}. 
\end{proposition}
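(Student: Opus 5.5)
The plan is to work with an explicit model of the Shrikhande graph and to determine all vertex partitions satisfying (B) directly. Take the Cayley graph on $\mathbb{Z}_4 \times \mathbb{Z}_4$ with connection set $S = \{ \pm(1,0), \pm(0,1), \pm(1,1) \}$. The maps $(a,b) \mapsto (b,a)$ and $(a,b) \mapsto (-b,a-b)$ preserve $S$. Together with translations they generate automorphisms acting transitively on the three ``directions'' $\langle (1,0) \rangle$, $\langle (0,1) \rangle$, $\langle (1,1) \rangle$. Each coset of one of these cyclic subgroups of order $4$ induces a $4$-cycle; it is induced because $2(1,0), 2(0,1), 2(1,1) \notin S$. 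This gives $12$ induced $4$-cycles, falling into three parallel classes of four.

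\emph{Step 1: every induced $4$-cycle is one of these $12$ cosets.} By translation invariance, take an induced $4$-cycle through $0$ and let $y$ be its vertex opposite to $0$. Then $y$ is nonadjacent to $0$ and the two remaining cycle vertices are the two common neighbours of $0$ and $y$, which must themselves be nonadjacent. Up to the automorphisms fixing $0$, one can list the possible $y \in \mathbb{Z}_4^2 \setminus (S \cup \{0\})$ and their two common neighbours with $0$. One checks that these two neighbours are nonadjacent exactly when $y \in \{ (2,0),(0,2),(2,2) \}$. In that case the cycle is the corresponding coset; otherwise $y$ has the form $(1,-1)$ or $(2,1)$ up to symmetry, and its two common neighbours with $0$ are adjacent. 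This is a finite check with $9$ candidates for $y$, cut down by symmetry.

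\emph{Step 2: classify the partitions.} Suppose $\omega_1$ is the disjoint union of two induced $4$-cycles with no edges between them. Both cycles are cosets by Step 1. Cosets of different directions meet in a vertex, so they cannot be disjoint. Hence both cycles have the same direction, say $\langle (1,0) \rangle + c$ and $\langle (1,0) \rangle + c'$ with $c \ne c'$. The shift $c'-c = (0,\pm 1)$ creates edges, so $c' - c = (0,2)$. Then $\omega_1$ is a coset of $\langle (1,0),(0,2) \rangle$, and $\omega_2$ is the other coset, which also satisfies (B). So there are exactly three unordered pairs $\{\omega_1,\omega_2\}$, one per direction. The automorphisms above permute the directions transitively, and the translation by $(0,1)$ swaps $\omega_1$ and $\omega_2$. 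Hence the ordered pair is unique up to automorphisms.

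\emph{Step 3: identify the type, which I expect to be the main obstacle.} The Shrikhande graph has local graphs $C_6$, so its pair has one of the types in Table~\ref{tab1} other than Type 1. Take $C_1 = \langle (1,0) \rangle$, $C_2 = C_1 + (0,1)$, $C_3 = C_1 + (0,2)$, $C_4 = C_1+(0,3)$. The vertex $(a,b)$ is adjacent in the next coset to exactly $(a,b+1)$ and $(a+1,b+1)$, which form an edge. So every pair is of type (1), and $\mathrm{I}_{C_i,C_{i+1}}$ sends the vertex $(a,b)$ to the edge $\{(a,b+1),(a+1,b+1)\}$. Likewise it sends an edge $\{(a,b),(a+1,b)\}$ to the unique common neighbour $(a+1,b+1)$. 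Composing the four maps around the cycle, vertex $\to$ edge $\to$ vertex $\to$ edge $\to$ vertex, one computes that $\Delta_{1234}$ is the translation $(a,0) \mapsto (a+2,0)$ of $C_1$. This is the half-turn of the $4$-cycle, namely $(13)(24)$ in the labelling $v_1,\dots,v_4$ used for type (1). The delicate point is tracking carefully the orientation conventions of that labelling, so that the half-turn is not confused with the identity or a quarter-turn. The half-turn is, however, distinguished from the other classes because it is the unique nontrivial central element of $D_8$, which makes the identification robust. Therefore the Shrikhande graph corresponds to $\Gamma_{1,(13)(24)}$, i.e. Type 4.
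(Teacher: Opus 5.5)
Your proposal is correct, but it takes a different route from the paper.

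\textbf{The paper's route.} The paper never uses coordinates. It obtains the Shrikhande graph from the $4\times 4$ rook graph by Seidel switching with respect to an independent $4$-set inside $\omega_1$, and notes that the (B)-partition survives the switching; this gives existence. It then eliminates every non-rook entry of Table~\ref{tab1} except Type 4, using only two facts about an $\mathrm{SRG}(16,6,2,2)$:
\begin{enumerate}
\item[(a)] By Hoffman's bound every independent $4$-set is $2$-regular. This excludes types (2), (3a) and (3b), because $v_{10}$ sees three vertices of $\{v_1,v_3,v_5,v_7\}$.
\item[(b)] The parameter $\mu=2$ forces $4^\varphi=2$ and $1^\varphi=3$ in $\Gamma_{1,\varphi}$, i.e.\ $\varphi=(13)(24)$.
\end{enumerate}
Uniqueness of the ordered pair then comes for free, since Type 4 is a single isomorphism class of triples.

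\textbf{Your route.} You work instead in the Cayley model on $\mathbb{Z}_4^2$, and each step checks out:
\begin{enumerate}
\item[(1)] Your enumeration of the nine non-neighbours $y$ of $0$ is right. Only $y\in\{(2,0),(0,2),(2,2)\}$ have nonadjacent common neighbours with $0$, so the induced $4$-cycles are exactly the twelve cosets.
\item[(2)] Consequently there are exactly three unordered (B)-partitions, permuted transitively by automorphisms, and the two parts are swapped by a translation.
\item[(3)] Your computation $\Delta_{1234}\colon (a,0)\mapsto(a+2,0)$ agrees with the paper. Label $C_1,C_2,C_3,C_4$ as $v_1,\dots,v_4$; $v_9,\dots,v_{12}$; $v_5,\dots,v_8$; $v_{13},\dots,v_{16}$ in increasing first coordinate. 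This labelling satisfies the paper's type-(1) conventions, and one recovers precisely the paper's relations $4^\varphi=2$ and $1^\varphi=3$.
\end{enumerate}
Your remark that the half-turn is the unique nontrivial central element of $D_8$ correctly disposes of any orientation worries.

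\textbf{Comparison.} Your argument proves the uniqueness claim independently of the completeness of the classification behind Table~\ref{tab1}. It also yields more explicit information: the number of (B)-partitions and how the automorphism group acts on them. The cost is a coordinate model and a finite check. The paper's argument is coordinate-free and shows in passing that Type 4 is the only non-rook triple whose graph is strongly regular. That is exactly the structural property exploited in Section~\ref{sec5}.
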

\begin{proof}
Consider a partition $\{ \omega_1,\omega_2 \}$ of the $4 \times 4$ rook graph $\Gamma_1$ for which property (B) holds, and let $I$ be an independent set of size 4 contained in $\omega_1$. Consider the Shrikhande graph as the graph $\Gamma_2$ obtained from $\Gamma_1$ by applying so-called (Seidel) switching with respect to $I$, see Example 4.6 of \cite{Cam-vL}. Then $\{ \omega_1,\omega_2 \}$ is still a partition of the vertex set of $\Gamma_2$ for which property (B) holds (but with other adjacencies in $\omega_1$). We show that this situation needs to correspond to type 4, as in all other cases for which the associated graph is not a $4 \times 4$ rook graph (type 1), there exists a independent set of size 4 that is not 2-regular, or the associated graph is not strongly regular with $\mu$-parameter equal to $2$.

For types 7 till 12 (cases 2, (3a) and (3b)), this follows from the fact that the independent set $\{ v_1,v_3,v_5,v_7\}$ is not 2-regular as $\Gamma(v_{10}) \cap \{ v_1,v_3,v_5,v_7 \} = \{ v_1,v_5,v_7 \}$. 

Consider therefore case 1 and the graph $\Gamma_{1,\varphi}$ with $\varphi \in D_8$. The vertices $v_2$ and $v_7$ are nonadjacent and already have $v_{10}$ and $v_{11}$ as common neighbours. The neighbours $v_{15}$ and $v_{16}$ of $v_7$ are therefore not adjacent to $v_2$, which implies that $4^\varphi =2$. Similarly, the vertices $v_3$ and $v_8$ are nonadjacent and already have $v_{11}$ and $v_{12}$ as common neighbours. The neighbours $v_{13}$ and $v_{16}$ of $v_8$ are therefore not adjacent to $v_3$, implying that $1^\varphi = 3$. \

We thus conclude that $\varphi=(13)(24)$ and that type 4 occurs.
\end{proof}

\medskip There are thus up to graph isomorphisms 12 triples $(\Gamma,\omega_1,\omega_2)$, where $\Gamma$ is an edge-regular graph with parameters $(v,k,\lambda)=(16,6,2)$ and $\{ \omega_1,\omega_2 \}$ a partition of the vertex set $V$ of $\Gamma$ in two sets of size $8$ such that the induced subgraph on each $\omega_i$, $i \in \{ 1,2 \}$, is the disjoint union of two $C_4$'s. These 12 types are mentioned in Table \ref{tab1}. Note that each triple $(\Gamma,\omega_1,\omega_2)$ admits four pairs $(\mathcal{P},\mathcal{P}')$, where $\mathcal{P}$ and $\mathcal{P}'$ are two partitions of $V$ in four independent sets of size 4 compatible with $(\omega_1,\omega_2)$.

By Theorem \ref{theo2.1}, we obtain Neumaier graphs by combining or ``stacking'' three such triples, among which certainly one needs to be associated with a $4 \times 4$ rook graph (type 1) in order to obtain a clique of size $4$, necessarily to have a Neumaier graph. If the Neumaier graph is obtained by combining the triple of type 1, with one of type A  and another one of type B, where $1 \leq A \leq B \leq 12$, then we say that the Neumaier graph has {\em type $(1,A,B)$}. There are thus 78 possible types for the constructed Neumaier graphs, resulting in 78 (not necessarily disjoint) families $\mathcal{F}_1,\mathcal{F}_2,\ldots,\mathcal{F}_{78}$ defined by the types $(1,A,B)$, ordered lexicographically (see Tables \ref{tab2} and \ref{tab3}).

Now, fix one of the 78 possible types $(1,A,B)$ and take three triples $(\Gamma_i,\omega_1^{(i)},\omega_2^{(i)})$ of respective types 1, $A$ and $B$. The ordered pair consisting of two partitions of the vertex set of $\Gamma_i$ in four independent sets of size 4 compatible with $(\omega_1^{(i)},\omega_2^{(i)})$ will be denoted by $(\mathcal{P}_i,\mathcal{P}_i')$. As mentioned before, there are for every $i \in \{ 1,2,3 \}$ four possibilities for such a pair. Once we have fixed the three pairs $(\mathcal{P}_1,\mathcal{P}_1')$, $(\mathcal{P}_2,\mathcal{P}_2')$ and $(\mathcal{P}_3,\mathcal{P}_3')$ ($4^3$ possibilities), there are also four possibilities for each of the maps $\varphi_{12}$, $\varphi_{23}$, $\varphi_{31}$, resulting all together in $4^6=4096$ Neumaier graphs by Section \ref{sec2}.

For each of the 78 possible types, we have implemented these 4096 graphs using the computer algebra systems GAP and SageMath, verified computationally that we indeed have Neumaier graphs (as Theorem \ref{theo2.1} says) and determined how many nonisomorphic ones there are. We also determined the characteristic polynomials for the graphs. The conclusion was as follows (see Tables \ref{tab2} and \ref{tab3}):
\begin{itemize}
\item for each of the families $\mathcal{F}_1,\mathcal{F}_2,\ldots,\mathcal{F}_{57}$, all graphs in the family have the same characteristic polynomials;
\item for each of the remaining families $\mathcal{F}_{58},\mathcal{F}_{59},\ldots,\mathcal{F}_{78}$, there are three characteristic polynomials in the family. Such a family $\mathcal{F}_i$ can thus be subdivided into three subfamilies $\mathcal{F}_{iA}$, $\mathcal{F}_{iB}$, $\mathcal{F}_{iC}$. In fact, this subdivision, can be made such that $(|\mathcal{F}_{iA}|,|\mathcal{F}_{iB}|,|\mathcal{F}_{iC}|)$ is equal to either $(2,4,10)$, $(2,6,14)$, $(2,8,20)$ or $(4,6,14)$.
\end{itemize}
We have also computationally verified that any two of the 78 families either completely overlap (when the graphs in one family can be paired with the graphs in the other family, by isomorphism), or have no overlap at all (if no isomorphisms exist between graphs of the families). In fact, we have found an heuristic rule for determining whether two families with respective types $(1,A_1,B_1)$ and $(1,A_2,B_2)$, with $(A_1,B_1) \not= (A_2,B_2)$ completely overlap. For this to be the case, we need that $(A_2,B_2)$ is obtained from $(A_1,B_1)$ by replacing all $5$'s by $6$'s, and all $6$'s by $5$'s. According to this heuristic rule, for instance, the families of types $(1,5,5)$ and $(1,6,6)$ coincide, but are still distinct from the family of type $(1,5,6)$ (although the graphs still have the same spectrum, see Table \ref{tab2}). 

According to what just has been said, we can now consider the 109 families of mutually disjoint Neumaier graphs. These can be found in Tables \ref{tab2} and \ref{tab3}, along with the number of nonisomorphic graphs contained in it, and the number of eigenvalues of their graphs. We note that there are 1063 graphs in total, and that the number of eigenvalues is either 5 or a number in the interval $[8,20]$. We also notice that there are exactly 25 Neumaier graphs in the table with precisely 5 eigenvalues and that all these graphs are obtained by combining $4 \times 4$ rook graphs and Shrikhande graphs (with at least one rook graph). This answers an open problem from the literature as to whether Neumaier graphs with exactly give eigenvalues can exist. In the next section, we prove in a theoretical way that all Neumaier graphs that arise from rook graphs and Shrikhande graphs indeed have five eigenvalues.

We have also computed the spectrum of all graphs. We note that 14 (the degree), 6, 2, -2 and -6 are always eigenvalues and that the multiplicities of $14$ and $6$ are always 1. There are also 20 polynomials that can occur in the factorization of the characteristic polynomial as a product of irreducible factors over $\mathbb{Q}$. These polynomials are $p_{1A}(x)=x-14$, $p_{1B}(x)=x-6$, $p_{1C}(x)=x-2$, $p_{1D}(x)=x+2$, $p_{1E}(x)=x+6$, $p_{1F}(x)=x$, $P_{1G}(x)=x-4$, and the thirteen polynomials that can be found  in Table \ref{tab4} along with their roots. The spectrum of the 1063 graphs has also been mentioned in Tables \ref{tab2} and \ref{tab3}. These spectra are composed as follows:
\begin{itemize}
\item an eigenvalue $\lambda \in \{ 14,6,2,-2,-6,0,4,\sqrt{2},-\sqrt{2},\sqrt{8},-\sqrt{8},-2+2\sqrt{5},-2-2\sqrt{5},1+\sqrt{5},-1-\sqrt{5} \}$ with multiplicity $m$ yields a contribution to the spectrum, denoted by $\lambda^m$;
\item the roots of any polynomial $p_X(x)$ with $X \in \{ 3A,3B,3C,4A,4B,4C,4D,5A,5B \}$ as listed in Table \ref{tab4} that occurs $M$ times in the factorization of the characteristic polynomial as a product of irreducible factors over $\mathbb{Q}$ yields a contribution to the spectrum, denoted by $(X)^m$.   
\end{itemize}
From Tables \ref{tab2} and \ref{tab3}, we see that some of the $109$ families have graphs with the same spectra (i.e. characteristic polynomials). Again, we can provide an heuristic rule as to when this is precisely the case:
\begin{itemize}
\item if $(N_1,N_2) \in \{ (59,69),(63,71),(67,72)\}$ and $X \in \{ A,B,C \}$, then the graphs in the families $N_1X$ and $N_2X$ have the same spectra.
\item Families of types $(1,A_1,B_1)$ and $(1,A_2,B_2)$ give rise to the same spectrum if the multiset $\{ A_2,B_2 \}$ can be obtained from the multiset $\{ A_1,B_1 \}$ by interchanging some of the $1$'s by $4$'s.
\item As mentioned before, the families of types (1,5,5) (or (1,6,6)) and (1,5,6) have graphs with the same spectrum. This spectrum is moreover the same as the spectra of the graphs in the families $(1,1,2)$ and $(1,2,4)$. 
\end{itemize}
Taking these heuristic rules into account, 87 distinct spectra remain for the 1063 Neumaier graphs (see Tables \ref{tab2} and \ref{tab3}).

\section{Determination of the spectrum for the graphs in families 1, 4 and 34} \label{sec5}

Consider again the construction of Section \ref{sec2}, and suppose that each of $\Gamma_1=(V_1,E_1)$, $\Gamma_2=(V_2,E_2)$, $\Gamma_3=(V_3,E_3)$ is a $4 \times 4$ rook graph or a Shrikhande graph. Let $\overline{\Gamma}$ be a graph as obtained in Section \ref{sec2} after making particular choices for $\Omega_1$, $\Omega_2$, $\Omega_3$, $(\mathcal{P}_1,\mathcal{P}_1')$, $(\mathcal{P}_2,\mathcal{P}_2')$, $(\mathcal{P}_3,\mathcal{P}_3')$, $\varphi_{12}$, $\varphi_{23}$ and $\varphi_{31}$. Note that we do not assume that one of the $\Gamma_i$'s is a rook graph. So, $\overline{\Gamma}$ is not necessarily a Neumaier graph.

The vertex set $V$ of $\overline{\Gamma}$ is then the disjoint union of $V_1$, $V_2$ and $V_3$. We now partition the set $V \times V$ in nine subsets $R_i$, $i \in \{ 1,2,\ldots,9 \}$.

Consider two vertices $x \in V_i$ and $y \in V_j$ with $i,j \in \{ 1,2,3 \}$. Let $\omega_x$ denote the element of $\Omega_i$ containing $x$, and let $I_x \subset \omega_x$ denote the intersection of the elements of $\mathcal{P}_i$ and $\mathcal{P}_i'$ that contain $x$. 

\medskip \noindent Suppose first that $i=j$. Then we say that 

$\bullet$ $(x,y) \in R_1$ if $y=x$, 

$\bullet$ $(x,y) \in R_2$ if $y \in \omega_x$ and $y \sim x$,

$\bullet$ $(x,y) \in R_3$ if $y \not= x$ and $y \in I_x$,

$\bullet$ $(x,y) \in R_4$ if $y \in \omega_x$, $y \not=x$, $y \not\sim x$ and $y \notin I_x$, 

$\bullet$ $(x,y) \in R_5$ if $y \not\in \omega_x$ and $y \sim x$,

$\bullet$ $(x,y) \in R_6$ if $y \not\in \omega_x$ and $y \not\sim x$.

\noindent If $i \not= j$, then we say that

$\bullet$ $(x,y) \in R_7$ if $y \sim x$;

$\bullet$ $(x,y) \in R_8$ if $y \in \omega_x^{\varphi_{ij}}$ and $y \not\sim x$;

$\bullet$ $(x,y) \in R_9$ if $y \not\in \omega_x^{\varphi_{ij}}$.

\medskip \noindent It is clear that $R_1,R_2,\ldots,R_9$ are symmetric relations on the set $V$. Recall that the graphs $\Gamma_1$, $\Gamma_2$, $\Gamma_3$ are strongly regular with parameters $(16,6,2,2)$, and that every independent set of size 4 in any of these graphs is regular with nexus 2. Also, if we define $E_r$, $E_s$, $E_t$ and $E_u$ as in Section \ref{sec4}, then we have $|E_r|=|E_s|=|E_t|=|E_u|=1$ for both the $4 \times 4$ rook graph as the Shrikhande graph. Using the facts, it is then straightforward to show the following:
\begin{quote}
For $i,j \in \{ 1,2,\ldots,9 \}$ and $(x,y) \in R_i$, let $M_{ij}(x,y)$ denote the number of vertices $z$ adjacent to $y$ for which $(x,z) \in R_j$. Then $M_{ij}(x,y)$ only depends on $i$ and $j$ and not on the particular choice of the vertices $x$ and $y$ for which $(x,y) \in R_i$. Moreover, the numbers $M_{ij}$ are as in the following matrix: 
\end{quote}
\begin{displaymath}
M = \left[
\begin{array}{ccccccccc}
0 & 2 & 0 & 0 & 4 & 0 & 8 & 0 & 0 \\
1 & 0 & 1 & 0 & 2 & 2 & 0 & 8 & 0 \\
0 & 2 & 0 & 0 & 0 & 4 & 8 & 0 & 0 \\
0 & 0 & 0 & 2 & 2 & 2 & 4 & 4 & 0 \\
1 & 1 & 0 & 2 & 1 & 1 & 0 & 0 & 8 \\
0 & 1 & 1 & 2 & 1 & 1 & 0 & 0 & 8 \\
1 & 0 & 1 & 2 & 0 & 0 & 2 & 4 & 4 \\ 
0 & 2 & 0 & 2 & 0 & 0 & 4 & 2 & 4 \\
0 & 0 & 0 & 0 & 2 & 2 & 2 & 2 & 6 
\end{array}
\right].
\end{displaymath}

The matrix $M$ has eigenvalues $\lambda_1=14$, $\lambda_2=6$, $\lambda_3=2$, $\lambda_4=-2$ and $\lambda_5=-6$, with respective multiplicities $1$, $1$, $4$, $1$ and $2$. Following a reasoning completely similar to the one on page 1226 of \cite{BDB:2}, this means that the adjacency matrix $A$ of $\overline{\Gamma}$ also has these eigenvalues. Denote by $m_i$, $i \in \{ 1,2,3,4,5 \}$, the multiplicity of $\lambda_i$ regarded as an eigenvalue of $A$. As $\overline{\Gamma}$ is a connected $14$-regular graph, we have $m_1=1$. Note that $\overline{\Gamma}$ is an edge-regular graph with parameters $(v,k',\lambda)=(48,14,2)$. From $\Sigma m_i = v$, $\Sigma \lambda_i m_i =0$, $\sum \lambda_i^2 m_i = vk'$ and $\sum \lambda_i^3 m_i = v k' \lambda$, it readily follows that $m_2=1$, $m_3=26$, $m_4=12$ and $m_5=8$. So, $\overline{\Gamma}$ is a graph with spectrum $14^1 6^1 2^{26} (-2)^{12} (-6)^8$.

\medskip \noindent \textbf{Remark.} This case includes besides $25$ Neumaier graphs (Families 1, 4 and 34) also 9 mutually nonisomorphic edge-regular graphs with parameters $(48,14,2)$ that are non-Neumaier, and which are obtained by stacking three Shrikhande graphs. 

\begin{center}
\begin{table}
{\scriptsize
\begin{tabular}{|c||c|c|c|c|}
\hline
Family & \# graphs & \# eigenvalues & Spectrum & Type \\
\hline
\hline
1 & 4 & 5  & $14^1 6^1 2^{26} (-2)^{12} (-6)^8 $ & (1,1,1) \\
\hline
2 & 7 & 8 & $14^1 6^1 2^{22} (-2)^{8} (-6)^{8} 0^4 (\pm \sqrt{8})^2$ & (1,1,2) \\
\hline
3 & 3 & 9 & $14^1 6^1 2^{22} (-2)^{8} (-6)^{8} (4A)^2$ & (1,1,3) \\
\hline
4 & 7 & 5 & $14^1 6^1 2^{26} (-2)^{12} (-6)^{8}$ & (1,1,4) \\
\hline
5/6 & 4 & 8 & $14^1 6^1 2^{24} (-2)^{10} (-6)^{8} 0^2 (\pm \sqrt{8})^1$ & (1,1,5)/(1,1,6) \\
\hline
7 & 7 & 9 & $14^1 6^1 2^{20} (-2)^{10} (-6)^{6} 0^4 (3B)^2$ & (1,1,7) \\
\hline
8 & 7 & 10 & $14^1 6^1 2^{18} (-2)^{8} (-6)^{6} (\pm \sqrt{2})^4 (3B)^2$ & (1,1,8) \\
\hline
9 & 7 & 11 & $14^1 6^1 2^{19} (-2)^{9} (-6)^{6} 0^2 (\pm \sqrt{2})^2 (3B)^2$ & (1,1,9) \\
\hline
10 & 7 & 9 & $14^1 6^1 2^{22} (-2)^{10} (-6)^{6} (4C)^2$ & (1,1,10) \\
\hline
11 & 7 & 9 & $14^1 6^1 2^{22} (-2)^{10} (-6)^{6} 0^2 (3A)^2$ & (1,1,11) \\
\hline
12 & 7 & 10 & $14^1 6^1 2^{21} (-2)^{9} (-6)^{6} (\pm \sqrt{2})^2 (3A)^2$ & (1,1,12) \\
\hline
13 & 14 & 8 & $14^1 6^1 2^{18} (-2)^{4} (-6)^{8} 0^8 (\pm \sqrt{8})^4$ & (1,2,2) \\
\hline
14 & 6 & 12 & $14^1 6^1 2^{18} (-2)^{4} (-6)^{8} 0^4 (\pm \sqrt{8})^2 (4A)^2$ & (1,2,3) \\
\hline
15 & 20 & 8 & $14^1 6^1 2^{22} (-2)^{8} (-6)^{8} 0^4 (\pm \sqrt{8})^2$ & (1,2,4) \\
\hline
16/17 & 16 & 8 & $14^1 6^1 2^{20} (-2)^{6} (-6)^{8} 0^6 (\pm \sqrt{8})^3$ & (1,2,5)/(1,2,6) \\
\hline
18 & 20 & 11 & $14^1 6^1 2^{16} (-2)^{6} (-6)^{6} 0^8 (\pm \sqrt{8})^2 (3B)^2$ & (1,2,7) \\
\hline
19 & 20 & 13 & $14^1 6^1 2^{14} (-2)^{4} (-6)^{6} 0^4 (\pm \sqrt{2})^4 (\pm \sqrt{8})^2 (3B)^2$ & (1,2,8) \\
\hline
20 & 20 & 13 & $14^1 6^1 2^{15} (-2)^{5} (-6)^{6} 0^6 (\pm \sqrt{2})^2 (\pm \sqrt{8})^2 (3B)^2$ & (1,2,9) \\
\hline
21 & 20 & 12 & $14^1 6^1 2^{18} (-2)^{6} (-6)^{6} 0^4 (\pm \sqrt{8})^2 (4C)^2$ & (1,2,10) \\
\hline
22 & 20 & 11 & $14^1 6^1 2^{18} (-2)^{6} (-6)^{6} 0^6 (\pm \sqrt{8})^2 (3A)^2$ & (1,2,11) \\
\hline
23 & 20 & 13 & $14^1 6^1 2^{17} (-2)^{5} (-6)^{6} 0^4 (\pm \sqrt{2})^2 (\pm \sqrt{8})^2 (3A)^2$ & (1,2,12) \\
\hline
24 & 4 & 9 & $14^1 6^1 2^{18} (-2)^{4} (-6)^{8} (4A)^4$ & (1,3,3) \\
\hline
25 & 6 & 9 & $14^1 6^1 2^{22} (-2)^{8} (-6)^{8} (4A)^2$ & (1,3,4) \\
\hline
26/27 & 4 & 12 & $14^1 6^1 2^{20} (-2)^{6} (-6)^{8} 0^2 (\pm \sqrt{8})^1 (4A)^2$ & (1,3,5)/(1,3,6) \\
\hline
28 & 8 & 13 & $14^1 6^1 2^{16} (-2)^{6} (-6)^{6} 0^4 (3B)^2 (4A)^2$ & (1,3,7) \\
\hline
29 & 8 & 14 & $14^1 6^1 2^{14} (-2)^{4} (-6)^{6} (\pm \sqrt{2})^4 (3B)^2 (4A)^2$ & (1,3,8) \\
\hline
30 & 8 & 15 & $14^1 6^1 2^{15} (-2)^{5} (-6)^{6} 0^2 (\pm \sqrt{2})^2 (3B)^2 (4A)^2$ & (1,3,9) \\
\hline
31 & 8 & 13 & $14^1 6^1 2^{18} (-2)^{6} (-6)^{6} (4A)^2 (4C)^2$ & (1,3,10) \\
\hline
32 & 8 & 13  & $14^1 6^1 2^{18} (-2)^{6} (-6)^{6} 0^2 (3A)^2 (4A)^2$ & (1,3,11) \\
\hline
33 & 8 & 14  & $14^1 6^1 2^{17} (-2)^{5} (-6)^{6} (\pm \sqrt{2})^2 (3A)^2 (4A)^2$ & (1,3,12) \\
\hline
34 & 14 & 5 & $14^1 6^1 2^{26} (-2)^{12} (-6)^{8}$ & (1,4,4) \\
\hline
35/36 & 16 & 8 & $14^1 6^1 2^{24} (-2)^{10} (-6)^{8} 0^2 (\pm \sqrt{8})^1$ & (1,4,5)/(1,4,6) \\
\hline
37 & 20 & 9 & $14^1 6^1 2^{20} (-2)^{10} (-6)^{6} 0^4 (3B)^2$ & (1,4,7) \\
\hline
38 & 20 & 10 & $14^1 6^1 2^{18} (-2)^{8} (-6)^{6} (\pm \sqrt{2})^4 (3B)^2$ & (1,4,8) \\
\hline
39 & 20 & 11 & $14^1 6^1 2^{19} (-2)^{9} (-6)^{6} 0^2 (\pm \sqrt{2})^2 (3B)^2$ & (1,4,9) \\
\hline
40 & 20 & 9 & $14^1 6^1 2^{22} (-2)^{10} (-6)^{6} (4C)^2$ & (1,4,10) \\
\hline
41 & 20 & 9 & $14^1 6^1 2^{22} (-2)^{10} (-6)^{6} 0^2 (3A)^2$ & (1,4,11) \\
\hline
42 & 20 & 10 & $14^1 6^1 2^{21} (-2)^{9} (-6)^{6} (\pm \sqrt{2})^2 (3A)^2$ & (1,4,12) \\
\hline
43/51 & 12 & 8 & $14^1 6^1 2^{22} (-2)^{8} (-6)^{8} 0^4 (\pm \sqrt{8})^2$ & (1,5,5)/(1,6,6) \\
\hline
44 & 6 & 8 & $14^1 6^1 2^{22} (-2)^{8} (-6)^{8} 0^4 (\pm \sqrt{8})^2$ & (1,5,6) \\
\hline
45/52 & 16 & 11 & $14^1 6^1 2^{18} (-2)^{8} (-6)^{6} 0^6 (\pm \sqrt{8})^1 (3B)^2$ & (1,5,7)/(1,6,7) \\
\hline
46/53 & 16 & 13 & $14^1 6^1 2^{16} (-2)^{6} (-6)^{6} 0^2 (\pm \sqrt{2})^4 (\pm \sqrt{8})^1 (3B)^2$ & (1,5,8)/(1,6,8) \\
\hline
47/54 & 24 & 13 & $14^1 6^1 2^{17} (-2)^{7} (-6)^{6} 0^4 (\pm \sqrt{2})^2 (\pm \sqrt{8})^1 (3B)^2$ & (1,5,9)/(1,6,9) \\
\hline
48/55 & 16 & 12 & $14^1 6^1 2^{20} (-2)^{8} (-6)^{6} 0^2 (\pm \sqrt{8})^1 (4C)^2$ & (1,5,10)/(1,6,10) \\
\hline
49/56 & 24 & 11 & $14^1 6^1 2^{20} (-2)^{8} (-6)^{6} 0^4 (\pm \sqrt{8})^1 (3A)^2$ & (1,5,11)/(1,6,11) \\
\hline
50/57 & 24 & 13 & $14^1 6^1 2^{19} (-2)^{7} (-6)^{6} 0^2 (\pm \sqrt{2})^2 (\pm \sqrt{8})^1 (3A)^2$ & (1,5,12)/(1,6,12) \\
\hline
58A & 2 & 9 & $14^1 6^1 2^{16} (-2)^{8} (-6)^{6} 0^{10} 4^2 (-2 \pm 2 \sqrt{5})^2$ & (1,7,7) \\
\hline
58B & 4 & 12 & $14^1 6^1 2^{15} (-2)^{8} (-6)^{5} 0^9 4^1 (-2 \pm 2 \sqrt{5})^1 (3B)^1$ & (1,7,7) \\
\hline
58C & 10 & 9 & $14^1 6^1 2^{14} (-2)^{8} (-6)^{4} 0^8 (3B)^4$ & (1,7,7) \\
\hline
59A & 2 & 11 & $14^1 6^1 2^{14} (-2)^{6} (-6)^{6} 0^6 4^2 (\pm \sqrt{2})^4 (-2 \pm 2\sqrt{5})^2$ & (1,7,8) \\
\hline
59B & 6 & 14 & $14^1 6^1 2^{13} (-2)^{6} (-6)^{5} 0^5 4^1 (\pm \sqrt{2})^4 (-2 \pm 2 \sqrt{5})^1 (3B)^2$ & (1,7,8) \\
\hline
59C & 14 & 11 & $14^1 6^1 2^{12} (-2)^{6} (-6)^{4} 0^4 (\pm \sqrt{2})^4 (3B)^4$ & (1,7,8) \\
\hline
60A & 2 & 11 & $14^1 6^1 2^{15} (-2)^{7} (-6)^{6} 0^8 4^2 (\pm \sqrt{2})^2 (-2 \pm 2 \sqrt{5})^2$ & (1,7,9) \\
\hline
60B & 6 & 14 & $14^1 6^1 2^{14} (-2)^{7} (-6)^{5} 0^7 4^1 (\pm \sqrt{2})^2 (-2 \pm 2 \sqrt{5})^1 (3B)^2$ & (1,7,9) \\
\hline
60C & 14 & 11 & $14^1 6^1 2^{13} (-2)^{7} (-6)^{4} 0^6 (\pm \sqrt{2})^2 (3B)^4$ & (1,7,9) \\
\hline
\end{tabular}}
\caption{The 109 families of Neumaier graphs with parameters $(48,14,2;1,4)$, part I} \label{tab2}
\end{table}
\end{center}
 
\begin{center}
\begin{table}
{\scriptsize
\begin{tabular}{|c||c|c|c|c|}
\hline
Family & \# graphs & \# eigenvalues & Spectrum & Type \\
\hline
\hline
61A & 2 & 11  & $14^1 6^1 2^{16} (-2)^{8} (-6)^{6} 0^6 (5B)^2$ & (1,7,10) \\
\hline
61B & 6 & 18 & $14^1 6^1 2^{16} (-2)^{8} (-6)^{5} 0^5 (3B)^1 (4C)^1 (5B)^1$  & (1,7,10) \\
\hline
61C & 14 & 13 & $14^1 6^1 2^{16} (-2)^{8} (-6)^{4} 0^4 (3B)^2 (4C)^2$ & (1,7,10) \\
\hline
62A & 2 & 11 & $14^1 6^1 2^{16} (-2)^{8} (-6)^{6} 0^6 (5A)^2$  & (1,7,11) \\
\hline
62B & 6 & 17 & $14^1 6^1 2^{16} (-2)^{8} (-6)^{5} 0^6 (3A)^1 (3B)^1 (5A)^1$  & (1,7,11) \\
\hline
62C & 14 & 12 & $14^1 6^1 2^{16} (-2)^{8} (-6)^{4} 0^6 (3A)^2 (3B)^2$ & (1,7,11) \\
\hline
63A & 2 & 13 & $14^1 6^1 2^{15} (-2)^{7} (-6)^{6} 0^4 (\pm \sqrt{2})^2 (5A)^2$  & (1,7,12) \\
\hline
63B & 6 & 19 & $14^1 6^1 2^{15} (-2)^{7} (-6)^{5} 0^4 (\pm \sqrt{2})^2 (3A)^1 (3B)^1 (5A)^1$  & (1,7,12) \\
\hline
63C & 14 & 14 & $14^1 6^1 2^{15} (-2)^{7} (-6)^{4} 0^4  (\pm \sqrt{2})^2 (3A)^2 (3B)^2$ & (1,7,12) \\
\hline
64A & 2 & 11 & $14^1 6^1 2^{12} (-2)^{4} (-6)^{6} 0^2 4^2 (\pm \sqrt{2})^8 (-2 \pm 2 \sqrt{5})^2$ & (1,8,8) \\
\hline
64B & 4 & 14 & $14^1 6^1 2^{11} (-2)^{4} (-6)^{5} 0^1 4^1 (\pm \sqrt{2})^8 (-2 \pm 2 \sqrt{5})^1 (3B)^2$ & (1,8,8) \\
\hline
64C & 10 & 10 & $14^1 6^1 2^{10} (-2)^{4} (-6)^{4} (\pm \sqrt{2})^8 (3B)^4$ & (1,8,8) \\
\hline
65A & 2 & 11 & $14^1 6^1 2^{13} (-2)^{5} (-6)^{6} 0^4 4^2 (\pm \sqrt{2})^6 (-2 \pm 2 \sqrt{5})^2$ & (1,8,9) \\
\hline
65B & 6 & 14 & $14^1 6^1 2^{12} (-2)^{5} (-6)^{5} 0^3 4^1 (\pm \sqrt{2})^6 (-2 \pm 2 \sqrt{5})^1 (3B)^2$ & (1,8,9) \\
\hline
65C & 14 & 11 & $14^1 6^1 2^{11} (-2)^{5} (-6)^{4} 0^2 (\pm \sqrt{2})^6 (3B)^4$ & (1,8,9) \\
\hline
66A & 2 & 13 & $14^1 6^1 2^{14} (-2)^{6} (-6)^{6} 0^2 (\pm \sqrt{2})^4 (5B)^2$ & (1,8,10) \\
\hline
66B & 6 & 20 & $14^1 6^1 2^{14} (-2)^{6} (-6)^{5} 0^1 (\pm \sqrt{2})^4 (3B)^1 (4C)^1 (5B)^1$ & (1,8,10) \\
\hline
66C & 14 & 14 & $14^1 6^1 2^{14} (-2)^{6} (-6)^{4} (\pm \sqrt{2})^4 (3B)^2 (4C)^2$ & (1,8,10) \\
\hline
67A & 2 & 13 & $14^1 6^1 2^{14} (-2)^{6} (-6)^{6} 0^2 (\pm \sqrt{2})^4 (5A)^2$ & (1,8,11) \\
\hline
67B & 6 & 19 & $14^1 6^1 2^{14} (-2)^{6} (-6)^{5} 0^2 (\pm \sqrt{2})^4 (3A)^1 (3B)^1 (5A)^1$ & (1,8,11) \\
\hline
67C & 14 & 14 & $14^1 6^1 2^{14} (-2)^{6} (-6)^{4} 0^2 (\pm \sqrt{2})^4 (3A)^2 (3B)^2$ & (1,8,11) \\
\hline
68A & 2 & 12 & $14^1 6^1 2^{13} (-2)^{5} (-6)^{6} (\pm \sqrt{2})^6 (5A)^2$ & (1,8,12) \\
\hline
68B & 6 & 18 & $14^1 6^1 2^{13} (-2)^{5} (-6)^{5} (\pm \sqrt{2})^6 (3A)^1 (3B)^1 (5A)^1$ & (1,8,12) \\
\hline
68C & 14 & 13 & $14^1 6^1 2^{13} (-2)^{5} (-6)^{4} (\pm \sqrt{2})^6 (3A)^2 (3B)^2$ & (1,8,12) \\
\hline
69A & 4 & 11 & $14^1 6^1 2^{14} (-2)^{6} (-6)^{6} 0^6 4^2 (\pm \sqrt{2})^4 (-2 \pm 2 \sqrt{5})^2$ & (1,9,9) \\
\hline
69B & 6 & 14 & $14^1 6^1 2^{13} (-2)^{6} (-6)^{5} 0^5 4^1 (\pm \sqrt{2})^4 (-2 \pm 2 \sqrt{5})^1 (3B)^2$ & (1,9,9) \\
\hline
69C & 14 & 11 & $14^1 6^1 2^{12} (-2)^{6} (-6)^{4} 0^4 (\pm \sqrt{2})^4 (3B)^4$ & (1,9,9) \\
\hline
70A & 2 & 13 & $14^1 6^1 2^{15} (-2)^{7} (-6)^{6} 0^4 (\pm \sqrt{2})^2 (5B)^2$ & (1,9,10) \\
\hline
70B & 6 & 20  & $14^1 6^1 2^{15} (-2)^{7} (-6)^{5} 0^3 (\pm \sqrt{2})^2 (3B)^1 (4C)^1 (5B)^1$ & (1,9,10) \\
\hline
70C & 14 & 15 & $14^1 6^1 2^{15} (-2)^{7} (-6)^{4} 0^2 (\pm \sqrt{2})^2 (3B)^2 (4C)^2$ & (1,9,10) \\
\hline
71A & 2 & 13 & $14^1 6^1 2^{15} (-2)^{7} (-6)^{6} 0^4 (\pm \sqrt{2})^2 (5A)^2$ & (1,9,11) \\
\hline
71B & 8 & 19 & $14^1 6^1 2^{15} (-2)^{7} (-6)^{5} 0^4 (\pm \sqrt{2})^2 (3A)^1 (3B)^1 (5A)^1$ & (1,9,11) \\
\hline
71C & 20 & 14 & $14^1 6^1 2^{15} (-2)^{7} (-6)^{4} 0^4 (\pm \sqrt{2})^2 (3A)^2 (3B)^2$ & (1,9,11) \\
\hline
72A & 2 & 13 & $14^1 6^1 2^{14} (-2)^{6} (-6)^{6} 0^2 (\pm \sqrt{2})^4 (5A)^2$ & (1,9,12) \\
\hline
72B & 8 & 19 & $14^1 6^1 2^{14} (-2)^{6} (-6)^{5} 0^2 (\pm \sqrt{2})^4 (3A)^1 (3B)^1 (5A)^1$ & (1,9,12) \\
\hline
72C & 20 & 14 & $14^1 6^1 2^{14} (-2)^{6} (-6)^{4} 0^2 (\pm \sqrt{2})^4 (3A)^2 (3B)^2$ & (1,9,12) \\
\hline
73A & 2 & 12 & $14^1 6^1 2^{18} (-2)^{8} (-6)^{6} 0^2 (1 \pm \sqrt{5})^2 (4D)^2$ & (1,10,10) \\
\hline
73B & 4 & 16 & $14^1 6^1 2^{18} (-2)^{8} (-6)^{5} 0^1 (1 \pm \sqrt{5})^1 (4C)^2 (4D)^1$ & (1,10,10) \\
\hline
73C & 10 & 9 & $14^1 6^1 2^{18} (-2)^{8} (-6)^{4} (4C)^4$ & (1,10,10) \\
\hline
74A & 2 & 12 & $14^1 6^1 2^{18} (-2)^{8} (-6)^{6} 0^2 (1 \pm \sqrt{5})^2 (4B)^2$ & (1,10,11) \\
\hline
74B & 6 & 19  & $14^1 6^1 2^{18} (-2)^{8} (-6)^{5} 0^2 (1 \pm \sqrt{5})^1 (3A)^1 (4B)^1 (4C)^1$ & (1,10,11) \\
\hline
74C & 14 & 13 & $14^1 6^1 2^{18} (-2)^{8} (-6)^{4} 0^2 (3A)^2 (4C)^2$ & (1,10,11) \\
\hline
75A & 2 & 13 & $14^1 6^1 2^{17} (-2)^{7} (-6)^{6} (\pm \sqrt{2})^2 (1 \pm \sqrt{5})^2 (4B)^2$ & (1,10,12) \\
\hline
75B & 6 & 20 & $14^1 6^1 2^{17} (-2)^{7} (-6)^{5} (\pm \sqrt{2})^2 (1 \pm \sqrt{5})^1 (3A)^1 (4B)^1 (4C)^1$ & (1,10,12) \\
\hline
75C & 14 & 14 & $14^1 6^1 2^{17} (-2)^{7} (-6)^{4} (\pm \sqrt{2})^2 (3A)^2 (4C)^2$ & (1,10,12) \\
\hline
76A & 2 & 11 & $14^1 6^1 2^{18} (-2)^{8} (-6)^{6} 0^4 (1 \pm \sqrt{5})^2 (3C)^2$ & (1,11,11) \\
\hline
76B & 6 & 14 & $14^1 6^1 2^{18} (-2)^{8} (-6)^{5} 0^4 (1 \pm \sqrt{5})^1 (3A)^2 (3C)^1$ & (1,11,11) \\
\hline
76C & 14 & 9 & $14^1 6^1 2^{18} (-2)^{8} (-6)^{4} 0^4 (3A)^4$ & (1,11,11) \\
\hline
77A & 2 & 13 & $14^1 6^1 2^{17} (-2)^{7} (-6)^{6} 0^2 (\pm \sqrt{2})^2 (1 \pm \sqrt{5})^2 (3C)^2$ & (1,11,12) \\
\hline
77B & 8 & 16 & $14^1 6^1 2^{17} (-2)^{7} (-6)^{5} 0^2 (\pm \sqrt{2})^2 (1 \pm \sqrt{5})^1 (3A)^2 (3C)^1$ & (1,11,12) \\
\hline
77C & 20 & 11 & $14^1 6^1 2^{17} (-2)^{7} (-6)^{4} 0^2 (\pm \sqrt{2})^2 (3A)^4$ & (1,11,12) \\
\hline
78A & 2 & 12 & $14^1 6^1 2^{16} (-2)^{6} (-6)^{6} (\pm \sqrt{2})^4 (1 \pm \sqrt{5})^2 (3C)^2$ & (1,12,12) \\
\hline
78B & 6 & 15 & $14^1 6^1 2^{16} (-2)^{6} (-6)^{5} (\pm \sqrt{2})^4 (1 \pm \sqrt{5})^1 (3A)^2 (3C)^1$ & (1,12,12) \\
\hline
78C & 14 & 10 & $14^1 6^1 2^{16} (-2)^{6} (-6)^{4} (\pm \sqrt{2})^4 (3A)^4$ & (1,12,12) \\
\hline
\end{tabular}}
\caption{The 109 families of Neumaier graphs with parameters $(48,14,2;1,4)$, part II} \label{tab3}
\end{table}
\end{center}

\begin{table}
\begin{center}
\begin{tabular}{|c|c|}
\hline
Irreducible polynomial & Roots \\
\hline
\hline
$p_{2A}(x)=x^2-2$ & $-\sqrt{2} \approx -1.414... < \sqrt{2} \approx 1.414...$ \\
\hline
$p_{2B}(x)=x^2-8$ & $-\sqrt{8} \approx -2.828... < \sqrt{8} \approx 2.828...$ \\
\hline
$p_{2C}(x)=x^2-2x-4$ & $1-\sqrt{5} \approx -1.236... <  1 + \sqrt{5} \approx 3.236...$  \\
\hline
$p_{2D}(x)=x^2-4x-16$ & $2-2\sqrt{5} \approx -2.472... <  2 + 2\sqrt{5} \approx 6.472...$ \\
\hline
$p_{3A}(x) = x^3 + 4x^2 -16x -16$ & $ -6.172... < -0.856... <  3.028...$ \\
\hline
$p_{3B}(x) = x^3 + 2x^2 -24 x +16$ & $ -6.249... < 0.726... < 3.523...$ \\
\hline
$p_{3C}(x)=x^3+4x^2-16x-8$ & $ -6.328... <  -0.454... <  2.782...$ \\
\hline
$p_{4A}(x)=x^4-8x^2+8$ & $ -2.613... < -1.082... < 1.082... < 2.613...$  \\
\hline
$p_{4B}(x)=x^4+4x^3-16x^2-8x+8$ & $ -6.304... < -0.899... < 0.527... < 2.676...$ \\
\hline
$p_{4C}(x)=x^4+4x^3-16x^2-16x+8$ & $ -6.145... <  -1.179... < 0.374... < 2.950...$ \\
\hline
$p_{4D}(x) = x^4 + 4x^3 -16x^2 -8x +16$ & $ -6.280... < -1.142... < 0.875... < 2.546...$  \\
\hline
$p_{5A}(x) = x^5 -32x^3+64x^2 +32 x -64$ & $ -6.401...  < -1.005... < 0.985... <  2.746... < 3.674...$  \\
\hline
$p_{5B}(x) = x^5-32 x^3 + 64 x^2 +40x -96$ & $ -6.378... < -1.195... < 1.342... < 2.536... < 3.694...$ \\
\hline
\end{tabular}
\end{center}
\caption{The irreducible polynomials of degree at least 2 over $\mathbb{Q}$ that occur in the factorizations of the characteristic polynomials, and their roots} \label{tab4}
\end{table}

\end{document}